\providecommand{\U}[1]{\protect\rule{.1in}{.1in}}
\numberwithin{equation}{section}
\newtheorem{theorem}{Theorem}[section]
\newtheorem{corollary}{Corollary}[section]
\newtheorem{lemma}{Lemma}[section]
\newtheorem{proposition}{Proposition}[section]
\newtheorem{remark}{Remark}[section]
\newtheorem{definition}{Definition}[section]
\numberwithin{equation}{section}
\newcommand{\bbr}{\mathbb{R}}
\newcommand{\bbn}{\mathbb{N}}
\newcommand{\bd}{\begin{definition}}
\newcommand{\ed}{\end{definition}}
\newcommand{\br}{\begin{remark}}
\newcommand{\er}{\end{remark}}
\newcommand{\be}{\begin{equation}}
\newcommand{\ee}{\end{equation}}
\newcommand{\bc}{\begin{corollary}}
\newcommand{\ec}{\end{corollary}}
\begin{document}

\title[Stability of the logarithmic Sobolev inequality]{Sharp stability of the logarithmic Sobolev inequality in the critical point setting}

\author[J. Wei]{Juncheng Wei}
\address{\noindent Department of Mathematics, University of British Columbia,
Vancouver, B.C., Canada, V6T 1Z2}
\email{jcwei@math.ubc.ca}

\author[Y.Wu]{Yuanze Wu}
\address{\noindent  School of Mathematics, China
University of Mining and Technology, Xuzhou, 221116, P.R. China }
\email{wuyz850306@cumt.edu.cn}

\begin{abstract}
In this paper, we consider the Euclidean logarithmic Sobolev inequality
\begin{eqnarray*}
\int_{\bbr^d}|u|^2\log|u|dx\leq\frac{d}{4}\log\bigg(\frac{2}{\pi d e}\|\nabla u\|_{L^2(\bbr^d)}^2\bigg),
\end{eqnarray*}
where $u\in W^{1,2}(\bbr^d)$ with $d\geq2$ and $\|u\|_{L^2(\bbr^d)}=1$.  It is well known that extremal functions of this inequality are precisely the
Gaussians
\begin{eqnarray*}
\mathfrak{g}_{\sigma,z}(x)=(\pi\sigma)^{-\frac{d}{2}}\mathfrak{g}_{*}\bigg(\sqrt{\frac{\sigma}{2}}(x-z)\bigg)\quad\text{with}\quad \mathfrak{g}_{*}(x)=e^{-\frac{|x|^2}{2}}.
\end{eqnarray*}
We prove that if $u\geq0$ satisfying $(\nu-\frac12)c_0<\|u\|_{H^1(\bbr^d)}^2<(\nu+\frac12)c_0$ and $\|-\Delta u+u-2u\log |u|\|_{H^{-1}}\leq\delta$, where $c_0=\|\mathfrak{g}_{1,0}\|_{H^1(\bbr^d)}^2$, $\nu\in\bbn$ and $\delta>0$ sufficiently small, then
\begin{eqnarray*}
\text{dist}_{H^1}(u, \mathcal{M}^\nu)\lesssim\|-\Delta u+u-2u\log |u|\|_{H^{-1}}
\end{eqnarray*}
which is optimal in the sense that the order of the right hand side is sharp,
where
\begin{eqnarray*}
\mathcal{M}^\nu=\{(\mathfrak{g}_{1,0}(\cdot-z_1), \mathfrak{g}_{1,0}(\cdot-z_2), \cdots, \mathfrak{g}_{1,0}(\cdot-z_\nu))\mid z_i\in\bbr^d\}.
\end{eqnarray*}
Our result provides an optimal stability of the Euclidean logarithmic Sobolev inequality in the critical point setting.

\vspace{3mm} \noindent{\bf Keywords: Euclidean logarithmic Sobolev inequality; Optimal stability; Critical point setting.}

\vspace{3mm}\noindent {\bf AMS} Subject Classification 2010: 35A23; 35B35.%

\end{abstract}

\date{}
\maketitle

\section{Introduction}
A fundamental task in understanding functional inequalities, arise in the calculus of variations, geometry, etc, is to study the best constants, the classification of extremal functions, as well as their qualitative properties for parameters in the full region, since such functional inequalities are crucial in understanding nonlinear partial differential equations (Nonlinear PDEs for short) by virtue of the complete knowledge of the best constants, extremal functions, and qualitative properties.  The most well-studied functional inequality in the community of Nonlinear PDEs is the Sobolev inequality, whose classical one with exponent $2$ states that for any $u\in H^1(\bbr^d)$ with $d\geq3$, there holds
\begin{eqnarray}\label{eqnn0001}
S\bigg(\int_{\bbr^d}|u|^{\frac{2d}{d-2}}dx\bigg)^{\frac{d-2}{d}}\leq\int_{\bbr^d}|\nabla u|^2dx,
\end{eqnarray}
where $S>0$ is a constant which is only dependent of the dimension and $H^1(\bbr^d)$ is the classical Sobolev space given by
\begin{eqnarray*}
H^1(\bbr^d)=\{u\in L^{\frac{2d}{d-2}}(\bbr^d)\mid |\nabla u|\in L^2(\bbr^d)\}.
\end{eqnarray*}
It has been proved in \cite{A1976,T1976} that
\begin{eqnarray*}
S=\pi d(d-2)\bigg(\frac{\Gamma(\frac{d}{2})}{\Gamma(d)}\bigg)^{\frac{2}{d}}
\end{eqnarray*}
is optimal and the extremal functions, which are called the Aubin-Talenti bubbles in the literature, are given by
\begin{eqnarray*}
U_{\lambda,z,c}(x)=c\bigg(\frac{1}{1+\lambda^2|x-z|^2}\bigg)^{\frac{d-2}{2}},
\end{eqnarray*}
where $\lambda>0$, $c\in\bbr$ and $z\in\bbr^d$.

\vskip0.12in

Once a functional inequality is well understood for its best constant and extremal functions, it is natural to concern its stability, which is growingly interested in recent years by its important applications in understanding many Nonlinear PDEs, such as the fast diffusion equation, the Keller-Segel equation and so on.  The basic question one wants to address in this aspect is the following (cf. \cite{F2013}):
\begin{enumerate}
\item[$(Q)$]\quad Suppose we are given a functional inequality for which minimizers are known.
Can we prove, in some quantitative way, that if a function ``almost attains the
equality'' then it is close (in some suitable sense) to one of the minimizers?
\end{enumerate}
Such study was first raised by Brezis and Lieb in \cite{BL1985} for the classical Sobolev inequality~\eqref{eqnn0001} as an open question, which was settled by Bianchi and Egnell in \cite{BE1991} by proving that
\begin{eqnarray*}
dist_{H^{1}}^2(u, \mathcal{Z})\lesssim\|\nabla u\|^2_{L^2(\bbr^d)}-S\|u\|^2_{L^{\frac{2d}{d-2}}(\bbr^N)},
\end{eqnarray*}
where $\|\cdot\|_{L^p(\bbr^N)}$ is the usual norm in the Lebesgue space $L^p(\bbr^d)$ and
\begin{eqnarray*}
\mathcal{Z}=\{U_{\lambda,z,c}\mid (\lambda,z,c)\in\bbr_+\times\bbr^{d+1}\}.
\end{eqnarray*}

\vskip0.12in

Unlike Bianchi and Egnell's study (\cite{BE1991}) in the functional inequality setting, in recent years, Figalli et al. initiated the study on the stability of the classical Sobolev inequality~\eqref{eqnn0001} in the critical point setting, that is, studying the stability of the Euler-Lagrange equation of the classical Sobolev inequality~\eqref{eqnn0001}.  The study on the stability of the classical Sobolev inequality~\eqref{eqnn0001} in the critical point setting is more challenging since the Euler-Lagrange equation of the classical Sobolev inequality~\eqref{eqnn0001} has sign-changing solutions.  Moreover, the ``almost'' solutions of the Euler-Lagrange equation of the classical Sobolev inequality~\eqref{eqnn0001} may decompose into several parts at infinity (cf. \cite{S1984}).  By setting the study in a suitable way, Figalli et al. proved that
\begin{enumerate}
\item[$(1)$] (Ciraolo-Figalli-Maggi \cite{CFM2017})\quad Let $d\geq3$ and $u\in D^{1,2}(\bbr^d)$ be positive such that $\|\nabla u\|_{L^2(\bbr^d)}^2\leq\frac{3}{2}S^{\frac{d}{2}}$ and $\|\Delta u+|u|^{\frac{4}{d-2}}u\|_{H^{-1}}\leq\delta$ for some $\delta>0$ sufficiently small.  Then,
    \begin{eqnarray*}
    dist_{D^{1,2}}(u, \mathcal{M}_0)\lesssim\|\Delta u+|u|^{\frac{4}{d-2}}u\|_{H^{-1}},
    \end{eqnarray*}
    where $\mathcal{M}_0=\{(U[z, \lambda]\mid z\in\bbr^d, \lambda>0\}.$
\item[$(2)$] (Figalli-Glaudo \cite{FG2021})\quad Let $u\in D^{1,2}(\bbr^d)$ be nonnegative such that
\begin{eqnarray*}
(\nu-\frac12)S^\frac{d}{2}<\|u\|_{D^{1,2}(\bbr^N)}^2<(\nu+\frac12)S^\frac{d}{2}
\end{eqnarray*}
and $\|\Delta u+|u|^{\frac{4}{d-2}}u\|_{H^{-1}}\leq\delta$ for some $\delta>0$ sufficiently small.
Then, for $3\leq d\leq5$,
    \begin{eqnarray*}
    dist_{D^{1,2}}(u, \mathcal{M}_0^\nu)\lesssim\|\Delta u+|u|^{\frac{4}{d-2}}u\|_{H^{-1}}
    \end{eqnarray*}
    where
    \begin{eqnarray*}
    \mathcal{M}_0^\nu=\{(U[z_1, \lambda_1],U[z_2, \lambda_2],\cdots,U[z_\nu, \lambda_\nu])\mid z_i\in\bbr^d, \lambda_i>0\}.
    \end{eqnarray*}
\end{enumerate}
All the above results are optimal in the sense that the orders of the right hand sides in the above estimates are sharp, while the optimal stability of the classical Sobolev inequality~\eqref{eqnn0001} in the critical point setting for the case $N\geq6$ was left in \cite{FG2021} as an open problem which was solved by Deng, Sun and Wei in \cite{DSW2021}, very recently, by proving the following optimal stability:
\begin{eqnarray*}
dist_{D^{1,2}}(u, \mathcal{M}_0^\nu)\lesssim\left\{\aligned&\|\Delta u+|u|u\|_{H^{-1}}|\ln(\|\Delta u+|u|u\|_{H^{-1}})|^{\frac12},\ \ d=6;\\
&\|\Delta u+|u|^{\frac{4}{d-2}}u\|_{H^{-1}}^{\frac{d+2}{2(d-2)}},\ \ d\geq7.
\endaligned\right.
\end{eqnarray*}

\vskip0.12in

According to the important applications in understanding many Nonlinear PDEs, serval other famous functional inequalities, such as the Gagliardo-Nirenberg-Sobolev inequality (cf. \cite{CF2013,CFL2014,DT2013,DT16,N2019,R2014,S2019}), the Hardy-Littlewood-Sobolev inequality (cf. \cite{BT2002,DD2004,FT2002,GGMT1976,H1997,L1983,RSW2002}), the Caffarelli-Kohn-Nirenberg inequality (cf. \cite{CKN1984,CW2001,CC1993,DELT2009,DEL2012,DEL2016,FS2003,LW2004,WW2000,WW2003,WW22}), the $L^p$-Sobolev inequality (cf. \cite{C2006,CFMP2009,FN2019,FZ2020,F2015,FMP2007,N2020}) and so on, are also wildly studied on the best constants, the classification of extremal functions and stability.  However, according to their multi-parameters or no-Hilbert, most of these functional inequalities are far from well understood except for some special cases.

\vskip0.12in

It is worth pointing out that, besides the classical Sobolev inequality~\eqref{eqnn0001}, the Euclidean logarithmic Sobolev inequality,
\begin{eqnarray}\label{eq0001}
\int_{\bbr^d}|u|^2\log|u|dx\leq\frac{d}{4}\log\bigg(\frac{2}{\pi d e}\|\nabla u\|_{L^2(\bbr^d)}^2\bigg)
\end{eqnarray}
where $u\in W^{1,2}(\bbr^d)$ with $d\geq2$ and $\|u\|_{L^2(\bbr^d)}=1$, is also well understood for the best constants and the classification of extremal functions.  It is well known (cf. \cite{dPD03,W78}) that this inequality is optimal and extremal functions of \eqref{eq0001} are precisely the
Gaussians
\begin{eqnarray}\label{eq0002}
\mathfrak{g}_{\sigma,z}(x)=(\pi\sigma)^{-\frac{d}{2}}\mathfrak{g}_{*}\bigg(\sqrt{\frac{\sigma}{2}}(x-z)\bigg)\quad\text{with}\quad \mathfrak{g}_{*}(x)=e^{-\frac{|x|^2}{2}}.
\end{eqnarray}
Moreover, it is equivalent to the Gross logarithmic inequality (cf. \cite{G75}) with respect to Gaussian weight
\begin{eqnarray*}
\int_{\bbr^d}|g|^2\log|g|d\mu\leq\int_{\bbr^d}|\nabla g|^2d\mu\quad\text{with}\int_{\bbr^d}|g|^2d\mu=1\text{ and }d\mu=(2\pi)^{-\frac{d}{2}}e^{-\frac{|x|^2}{2}}.
\end{eqnarray*}
The Euclidean logarithmic Sobolev inequality has another two different forms:
\begin{eqnarray}\label{eq0005}
\int_{\bbr^d}|v|^2\log|v|^2dx\leq\frac{a^2}{\pi}\|\nabla v\|_{L^2(\bbr^d)}^2+\bigg(\log\|v\|_{L^2(\bbr^d)}^2-d(1+\log a)\bigg)\|v\|_{L^2(\bbr^d)}^2
\end{eqnarray}
for all $a>0$ and any function $v\in W^{1,2}(\bbr^d)$, and
\begin{eqnarray}\label{eq0006}
\int_{\bbr^d}|w|^2\log|w|^2dx+\frac{d}{2}(1+\frac12\log(2\pi))\leq\|\nabla w\|_{L^2(\bbr^d)}^2
\end{eqnarray}
for any function $w\in W^{1,2}(\bbr^d)$, $d\geq2$, with $\|w\|_{L^2(\bbr^d)}=1$.  \eqref{eq0005} and \eqref{eq0006} are equivalent by the relation $w(x)=v(\frac{a}{\sqrt{2\pi}}x)$ and they are established in \cite[Theorem~8.14]{LL01} and \cite[Theorem~1.2]{WZ19}, respectively.  However, \eqref{eq0001} is optimal since it can be obtained by minimizing the right hand side of \eqref{eq0005} for $a>0$ (cf. \cite{DT16}).

\vskip0.12in

The stability of the Euclidean logarithmic Sobolev inequality~\eqref{eq0001} in the functional inequality setting has also been widely studied, cf. \cite{BGRS2014,C1991,CF2013,DT16,FIL2016,FIPR2017,IM2014,IK2021,W78} and the references therein.  It is worth pointing out that most of these results are devoted to more general version of the logarithmic Sobolev inequality in the probability setting which contains the Euclidean logarithmic Sobolev inequality~\eqref{eq0001} as a special case for the Gaussians measure $d\mu=(2\pi)^{-\frac{d}{2}}e^{-\frac{|x|^2}{2}}$.
Thus, it is natural to consider the stability of the Euclidean logarithmic Sobolev inequality~\eqref{eq0001} in the critical point setting, as that for the classical Sobolev inequality~\eqref{eqnn0001}.  Let
\begin{eqnarray*}
\mathcal{E}(u)=\frac{d}{4}\log\bigg(\frac{2}{\pi d e}\|\nabla u\|_{L^2(\bbr^d)}^2\bigg)-\int_{\bbr^d}|u|^2\log|u|dx.
\end{eqnarray*}
Then critical points of $\mathcal{E}(u)$ in $H^1(\bbr^d)$ with the finite energy on the smooth manifold
\begin{eqnarray*}
\mathcal{S}=\{u\in H^1(\bbr^d)\mid \|u\|_{L^2(\bbr^d)}=1\}
\end{eqnarray*}
satisfy $\int_{\bbr^d}|u|^2|\log|u|^2|dx<+\infty$ and the following Euler-Lagrange equation
\begin{eqnarray}\label{eq0007}
-(\frac{d}{2\|\nabla u\|_{L^2(\bbr^d)}^2})\Delta u-(1+2\sigma_u)u=2u\log|u|\quad\text{in }\bbr^d,
\end{eqnarray}
where $\sigma_u$ is a part of unknown and appears in \eqref{eq0007} as the Lagrange multiplier.  Thus, \eqref{eq0007} is the Euler-Lagrange equation of \eqref{eq0001}.
Let
\begin{eqnarray*}
u_\lambda(x)=\lambda^\frac{d}{2}u(\lambda x)\quad\text{where }\lambda=\bigg(\frac{d}{2\|\nabla u\|_{L^2(\bbr^d)}^2}\bigg)^{\frac12},
\end{eqnarray*}
then by \eqref{eq0007} and a direct calculation, $u_{\lambda}$ satisfies
\begin{eqnarray}\label{eq0008}
-\Delta u_\lambda+(d\log\lambda-1-2\sigma_u)u_\lambda=2u_\lambda\log |u_\lambda|\quad\text{in }\bbr^d.
\end{eqnarray}
Let
\begin{eqnarray*}
u_\lambda^*(x)=\alpha_u u_\lambda(x)\quad\text{where }\alpha_u=e^{\frac{d}{2}\log\lambda-\sigma_u-1},
\end{eqnarray*}
then by \eqref{eq0008} and a direct calculation, $u_{\lambda}^*$ satisfies the equation
\begin{eqnarray}\label{eq0009}
-\Delta u+u=2u\log |u|\quad\text{in }\bbr^d.
\end{eqnarray}
Thus, the logarithmic Schr\"odinger equation~\eqref{eq0009} can be seen as the Euler-Lagrange equation of the Euclidean logarithmic Sobolev inequality~\eqref{eq0001}.

\vskip0.12in

It has been proved in \cite{dAMS14,T16} that the Gaussian
\begin{eqnarray}\label{eq0011}
\mathfrak{g}=e^{\frac{1+d}{2}}\mathfrak{g}_{*}
\end{eqnarray}
is the unique positive solution of the logarithmic Schr\"odinger equation~\eqref{eq0009} which satisfies $u(x)\to0$ as $|x|\to+\infty$, where $\mathfrak{g}_{*}$ is given by \eqref{eq0002}.  We remark that any solution of \eqref{eq0009} satisfying $\int_{\bbr^d}|u|^2|\log|u|^2|dx<+\infty$ must exponentially decay to zero as $|x|\to+\infty$ by the standard applications of the maximum principle.
Moreover, it has been proved in \cite[Theorem~1.3]{dAMS14} (see also \cite[Theorem~7.5]{GS11}) that $\mathfrak{g}$ is nondegenerate in the sense that $Ker(\mathcal{L})=span\{\partial_{x_j}\mathfrak{g}\}$ where
\begin{eqnarray}\label{eq0043}
\mathcal{L}=-\Delta+|x|^2-(d+2)
\end{eqnarray}
is the linearized operator of \eqref{eq0009} at $\mathfrak{g}$.  Note that \eqref{eq0009} is invariant under translations, thus, the smooth manifold
\begin{eqnarray*}
\mathcal{M}=\{\mathfrak{g}(\cdot-z)\mid z\in\bbr^d\}
\end{eqnarray*}
contains all positive solutions of \eqref{eq0009} satisfying $\int_{\bbr^d}|u|^2|\log|u|^2|dx<+\infty$.

\vskip0.12in

Let $\{u_n\}$, $u_n\geq0$ for all $n$, be bounded in $H^1(\bbr^d)$ and almost solves \eqref{eq0009}, that is, $\|-\Delta u_n+u_n-2u_n\log |u_n|\|_{H^{-1}}\to0$ as $n\to\infty$.  Then it is easy to see that $\{|u_n|^2|\log|u_n|^2|\}$ is bounded in $L^1(\bbr^d)$.
Thanks to the Brezis-Lieb lemma \cite[Lemma~3.1]{S19} and the positivity of the energy of the unique positive solution of \eqref{eq0009} (cf. \cite[Lemma~3.3]{S19}), it is standard (cf. \cite[Proposition~3.1]{AJ22}) to prove the following Struwe's decomposition of $\{u_n\}$.
\begin{proposition}\label{prop0001}
There exists $\nu\in\bbn$ and $y_{1,n},y_{2,n},\cdots y_{\nu,n}\subset\bbr^d$ such that
\begin{eqnarray*}
\|u_n-\sum_{i=1}^{\nu}\mathfrak{g}(\cdot-y_{i,n})\|_{H^1(\bbr^d)}\to0\quad\text{as }n\to\infty.
\end{eqnarray*}
\end{proposition}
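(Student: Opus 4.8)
The plan is to follow the now-standard concentration-compactness / Struwe profile decomposition strategy adapted to the logarithmic Schrödinger equation \eqref{eq0009}. Let $\{u_n\}$ with $u_n\geq0$ be bounded in $H^1(\bbr^d)$ with $r_n:=\|-\Delta u_n+u_n-2u_n\log|u_n|\|_{H^{-1}}\to0$. First I would record the uniform a priori bounds: testing the almost-equation against $u_n$ itself gives $\|\nabla u_n\|_{L^2}^2+\|u_n\|_{L^2}^2-2\int_{\bbr^d}|u_n|^2\log|u_n|\,dx = o(1)\|u_n\|_{H^1}$, and combined with the logarithmic Sobolev inequality \eqref{eq0001} (applied after $L^2$-normalization) this both confirms the $H^1$-boundedness is consistent and, crucially, yields that $\{|u_n|^2\,|\log|u_n|^2|\}$ is bounded in $L^1(\bbr^d)$; this last fact is what makes the logarithmic nonlinearity behave, near the profiles, like a ``critical'' nonlinearity amenable to a Brezis--Lieb splitting.

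Next I would set up the iterative bubble-extraction. If $u_n\to0$ in $L^2(\bbr^d)$ up to translations (i.e.\ $\sup_{z}\int_{B_1(z)}|u_n|^2\to0$), then by a Lions-type vanishing lemma together with the $L^1$ bound on $|u_n|^2|\log|u_n|^2|$ one shows $\int_{\bbr^d}|u_n|^2\log|u_n|\,dx\to0$ and hence, feeding this back into the energy identity, $\|u_n\|_{H^1}\to0$, which is the case $\nu=0$. Otherwise there are $y_{1,n}\in\bbr^d$ and $\rho>0$ with $\int_{B_1(y_{1,n})}|u_n|^2\geq\rho$; passing to a weak limit $w_1$ of $u_n(\cdot+y_{1,n})$ one checks $w_1\geq0$ is a nontrivial $H^1$ solution of \eqref{eq0009} (the weak convergence passes to the limit in the linear part, and the logarithmic term passes because of the $L^1$-equi-integrability control of $|u_n|^2|\log|u_n|^2|$ plus local compactness). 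By the classification recalled in the excerpt---$\mathfrak{g}$ from \eqref{eq0011} is the unique positive $H^1$ solution up to translation, so $w_1=\mathfrak{g}(\cdot-z)$ for some $z$, which we absorb into $y_{1,n}$---we get $w_1=\mathfrak{g}$. Then I subtract: set $u_n^{(1)}:=u_n-\mathfrak{g}(\cdot-y_{1,n})$. Using the Brezis--Lieb lemma for the logarithmic functional (\cite[Lemma~3.1]{S19}) one shows $u_n^{(1)}$ still almost-solves \eqref{eq0009} in $H^{-1}$ (the error is $o(1)$, combining the Brezis--Lieb decomposition of $\int|u|^2\log|u|$ with the fact that $\mathfrak{g}(\cdot-y_{1,n})$ is an exact solution) and that the energy drops by the fixed quantum $\mathcal{E}$-value of $\mathfrak{g}$, i.e.\ $\|u_n^{(1)}\|_{H^1}^2=\|u_n\|_{H^1}^2-c_0+o(1)$ with $c_0=\|\mathfrak{g}_{1,0}\|_{H^1}^2=\|\mathfrak{g}\|_{H^1}^2$ by positivity of the solution's energy (\cite[Lemma~3.3]{S19}).

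The iteration then terminates after finitely many steps because each extraction removes a fixed amount $c_0>0$ of the (bounded) $H^1$-norm squared, so after $\nu:=\lfloor \|u_n\|_{H^1}^2/c_0 \rfloor +1$ steps at most, the remainder vanishes in $H^1$; this gives the claimed decomposition $\|u_n-\sum_{i=1}^\nu\mathfrak{g}(\cdot-y_{i,n})\|_{H^1}\to0$ (note that the mutual translations automatically separate, $|y_{i,n}-y_{j,n}|\to\infty$ for $i\neq j$, since otherwise the weak limits would not be independent, contradicting the energy drop at each stage). The main obstacle is handling the logarithmic nonlinearity, which is neither Lipschitz nor of power type: near $u=0$ the term $u\log|u|$ has an unbounded (but integrable after multiplying by $u$) derivative, so passing to the limit in the equation and justifying the Brezis--Lieb-type energy and $H^{-1}$-splitting both require the uniform $L^1$ control of $|u_n|^2|\log|u_n|^2|$ and careful truncation arguments---this is exactly the content that \cite[Lemma~3.1, Lemma~3.3]{S19} and \cite[Proposition~3.1]{AJ22} are invoked to supply, so in the write-up I would cite those and emphasize that, modulo that input, the scheme is the classical one.
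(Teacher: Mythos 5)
Your proposal is correct and matches the paper's intent: the paper itself does not spell out a proof of Proposition~\ref{prop0001} but simply notes that, given the Brezis--Lieb lemma of \cite[Lemma~3.1]{S19}, the positivity of the energy of $\mathfrak{g}$ (\cite[Lemma~3.3]{S19}), and the uniqueness/nondegeneracy of $\mathfrak{g}$, the standard Struwe bubble-extraction as in \cite[Proposition~3.1]{AJ22} applies, which is exactly the concentration-compactness iteration you sketch. Your write-up supplies the same ingredients and the same termination mechanism (each extracted profile removes a fixed $c_0>0$ from the bounded $H^1$-norm squared), so it is essentially the argument the paper is referencing.
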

By Proposition~\ref{prop0001}, for nonnegative functions $u_n$ with uniform $H^1$-bound, we know that there exists $\nu\in\bbn$ such that $\{u_n\}$ will be close to the manifold
\begin{eqnarray*}
\mathcal{M}^\nu=\{(\mathfrak{g}(\cdot-z_1), \mathfrak{g}(\cdot-z_2), \cdots, \mathfrak{g}(\cdot-z_\nu))\mid z_i\in\bbr^d\}
\end{eqnarray*}
in the $H^1$-topology.  Thus, it is natural to ask, if $u\geq0$ satisfy
\begin{eqnarray*}
(\nu-\frac12)c_0<\|u\|_{H^1(\bbr^d)}^2<(\nu+\frac12)c_0\quad\text{and}\quad \|-\Delta u+u-2u\log |u|\|_{H^{-1}}\leq\delta,
\end{eqnarray*}
where $c_0=\|\mathfrak{g}\|_{H^1(\bbr^d)}^2$, $\nu\in\bbn$ and $\delta>0$ sufficiently small, can we obtain a quantitative version of Proposition~\ref{prop0001} by optimally controlling $dist_{H^1}(u, \mathcal{M}^\nu)$ by $\|-\Delta u+u-2u\log |u|\|_{H^{-1}}$?  The purpose of this paper is to give a positive answer to this natural question and our main result reads as follows.
\begin{theorem}\label{thmnn0001}
Let $u\geq0$ satisfy
\begin{eqnarray*}
(\nu-\frac12)c_0<\|u\|_{H^1(\bbr^d)}^2<(\nu+\frac12)c_0\quad\text{and}\quad \|-\Delta u+u-2u\log |u|\|_{H^{-1}}\leq\delta,
\end{eqnarray*}
where $c_0=\|\mathfrak{g}\|_{H^1(\bbr^d)}^2$, $\nu\in\bbn$ and $\delta>0$ sufficiently small.  Then
\begin{eqnarray}\label{eqnn0002}
\text{dist}_{H^1}(u, \mathcal{M}^\nu)\lesssim\|-\Delta u+u-2u\log |u|\|_{H^{-1}},
\end{eqnarray}
where
\begin{eqnarray*}
\mathcal{M}^\nu=\{(\mathfrak{g}_{1,0}(\cdot-z_1), \mathfrak{g}_{1,0}(\cdot-z_2), \cdots, \mathfrak{g}_{1,0}(\cdot-z_\nu))\mid z_i\in\bbr^d\}.
\end{eqnarray*}
Moreover, \eqref{eqnn0001} is optimal in the sense that the order of the right hand side is sharp.
\end{theorem}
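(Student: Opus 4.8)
The plan is to adapt the finite-dimensional reduction / quantitative Struwe decomposition strategy of Figalli--Glaudo and Deng--Sun--Wei to the logarithmic setting. First I would use Proposition~\ref{prop0001} together with a standard compactness--contradiction argument to show that, under the hypotheses, $u$ is already $H^1$-close to a sum $\sigma=\sum_{i=1}^\nu\mathfrak{g}(\cdot-\bar z_i)$ of $\nu$ well-separated Gaussians: the separation $\min_{i\neq j}|\bar z_i-\bar z_j|\to\infty$ as $\delta\to0$, since the energy window $((\nu-\tfrac12)c_0,(\nu+\tfrac12)c_0)$ forces exactly $\nu$ bubbles and two bubbles at bounded distance would lower the energy below the window (here the positivity of the energy of $\mathfrak{g}$ and the strict inequality $\|\mathfrak{g}\cdot 2\|^2$-type interaction estimates are used). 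Then I would invoke the nondegeneracy of $\mathfrak{g}$ — $\mathrm{Ker}(\mathcal L)=\mathrm{span}\{\partial_{x_j}\mathfrak{g}\}$ with $\mathcal L$ as in \eqref{eq0043} — to set up the orthogonal decomposition $u=\sum_{i=1}^\nu\mathfrak{g}(\cdot-z_i)+\rho$, where the $z_i$ are chosen so that $\rho\perp\partial_{x_j}\mathfrak{g}(\cdot-z_i)$ for all $i,j$ in a suitable weighted inner product, and $\|\rho\|_{H^1}\to0$.

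The core estimate is then to bound $\|\rho\|_{H^1}$ and the "residual interaction" by $\|f\|_{H^{-1}}$, where $f:=-\Delta u+u-2u\log|u|$. I would write the equation for $\rho$ by linearizing $-\Delta\cdot+\cdot-2\cdot\log|\cdot|$ at $\sigma$: schematically $\mathcal L_\sigma\rho=f+N(\rho)+R(\sigma)$, where $\mathcal L_\sigma$ is the (translated, summed) linearized operator, $N(\rho)$ collects the nonlinear-in-$\rho$ terms coming from $u\log|u|$, and $R(\sigma)=\big(-\Delta\sigma+\sigma-2\sigma\log|\sigma|\big)-\sum_i\big(-\Delta\mathfrak{g}(\cdot-z_i)+\cdots\big)$ is the error from superposing exact solutions (nonzero only through the nonlinearity $\sigma\log|\sigma|$ vs $\sum\mathfrak g_i\log\mathfrak g_i$). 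The linear operator $\mathcal L_\sigma$ restricted to the subspace orthogonal to $\{\partial_{x_j}\mathfrak{g}(\cdot-z_i)\}$ is invertible with a uniform (in $\delta$) bound, by nondegeneracy plus the bubble separation; hence $\|\rho\|_{H^1}\lesssim\|f\|_{H^{-1}}+\|N(\rho)\|_{H^{-1}}+\|R(\sigma)\|_{H^{-1}}$. The nonlinear term is controlled by $\|N(\rho)\|_{H^{-1}}=o(1)\|\rho\|_{H^1}$ using the specific structure $t\log|t|$ — one needs careful $L^p$/Orlicz-type estimates because $t\log t$ is not Lipschitz near $0$, but the exponential decay of $\mathfrak{g}$ saves the day — so it can be absorbed. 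The interaction error satisfies $\|R(\sigma)\|_{H^{-1}}\lesssim\varepsilon:=\sum_{i\neq j}e^{-c|z_i-z_j|^2}$ (Gaussian tails give a much faster-than-polynomial decay), and crucially $\varepsilon\lesssim\|f\|_{H^{-1}}$ — this last point is the heart of the argument.

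The main obstacle, exactly as in the Sobolev case, will be proving $\varepsilon\lesssim\|f\|_{H^{-1}}$, i.e. that the bubble-interaction scale is controlled by the equation residual rather than the other way around. The strategy is to test the equation $\mathcal L_\sigma\rho=f+N(\rho)+R(\sigma)$ against the approximate kernel directions — here the translation modes $\partial_{x_j}\mathfrak{g}(\cdot-z_i)$ — which annihilate the leading part of $\mathcal L_\sigma$ and isolate the interaction terms; pairing $R(\sigma)$ with $\partial_{x_j}\mathfrak{g}(\cdot-z_i)$ produces a quantity of exact order $\varepsilon$ (with a nonvanishing constant, using that $\mathfrak g$ is positive and radial), while the pairings with $f$ and with $N(\rho)$ are $O(\|f\|_{H^{-1}}+\|\rho\|_{H^1}\varepsilon)$, and combined with $\|\rho\|_{H^1}\lesssim\|f\|_{H^{-1}}+\varepsilon$ from the previous paragraph this closes a system of inequalities yielding both $\|\rho\|_{H^1}\lesssim\|f\|_{H^{-1}}$ and $\varepsilon\lesssim\|f\|_{H^{-1}}$. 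Because the Gaussian interaction decays like $e^{-c\,\mathrm{dist}^2}$ rather than polynomially, there is no dimensional threshold analogous to $d\geq 6$: the interaction is always negligible compared with any of its "self-interaction" moments, which is why the clean linear bound \eqref{eqnn0002} holds in every dimension $d\geq2$, and then $\mathrm{dist}_{H^1}(u,\mathcal M^\nu)\le\|\rho\|_{H^1}+\|\sigma-\textnormal{(nearest element)}\|_{H^1}\lesssim\|f\|_{H^{-1}}$. Sharpness of the order is shown by the usual one-parameter test family: take $u_t=\mathfrak g(\cdot-z_1)+\mathfrak g(\cdot-z_2)$ with $|z_1-z_2|\to\infty$ (or a single Gaussian plus a small multiple of a second-eigenfunction perturbation), compute that $\|f\|_{H^{-1}}$ and $\mathrm{dist}_{H^1}(u_t,\mathcal M^\nu)$ vanish at the same rate.
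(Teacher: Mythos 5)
Your overall architecture — Struwe decomposition plus finite-dimensional reduction, linearization at the bubble superposition, uniform linear theory, and a ``test the equation against special functions'' step to control the bubble interaction — is indeed the paper's strategy. But the single most important step, and the one where the logarithmic/Gaussian setting genuinely differs from the Sobolev case, is carried out incorrectly in your proposal, and without fixing it the argument does not close.

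You propose to prove $\varepsilon\lesssim\|f\|_{H^{-1}}$ (where $\varepsilon$ is the interaction scale) by pairing the error $R(\sigma)$ with the translation kernel modes $\partial_{x_j}\mathfrak{g}(\cdot-z_i)$, claiming this pairing is of ``exact order $\varepsilon$ with a nonvanishing constant.'' That is false for Gaussian bubbles. With two Gaussians at distance $\eta$, the error term $E$ (the paper's notation for $R(\sigma)$) is concentrated near the mid-hyperplane, where it has pointwise size $\sim e^{-\eta^2/4}$; the correct interaction scale, however, is $e^{-\eta^2/8}$ — this is what appears in $\|E\|_{\natural}$, in $\|\phi_f\|_{H^1}$, and ultimately in the optimality construction. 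The pairing $\int E\,\partial_{x_j}\mathfrak{g}_i$ is $O(\eta^\alpha e^{-\eta^2/4})$ because $\partial_{x_j}\mathfrak{g}_i$ also decays near the mid-hyperplane, whereas $\|\partial_{x_j}\mathfrak{g}_i\|_{H^1}\sim 1$; so this test function only yields $e^{-\eta^2/4}\lesssim\|f\|_{H^{-1}}$, which after combining with $\|\rho_f\|_{H^1}\lesssim e^{-\eta^2/8}+\|f\|_{H^{-1}}$ gives at best $\mathrm{dist}_{H^1}(u,\mathcal M^\nu)\lesssim\|f\|_{H^{-1}}^{1/2}$, not the sharp linear rate. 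The paper avoids this loss by using a \emph{spatially localized} test function $\widehat\psi=\mathfrak g_{1,f}\,\overline\psi$ (Lemma~\ref{lem0007}), a cutoff of one Gaussian supported in a fixed ball near the mid-hyperplane, slightly displaced towards the second bubble. This test function still produces a pairing $\int E\,\widehat\psi\sim e^{-\eta^2/4}$, but its $H^1$-norm is only $\sim e^{-\eta^2/8}$ (precisely because it is localized where $\mathfrak g_1$ is small), so dividing recovers the sharp bound $e^{-\eta^2/8}\lesssim\|f\|_{H^{-1}}$. This localization idea is the heart of the proof and is not present in your proposal.

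A second, smaller gap: the linearized operator $\mathcal L$ has the simple negative eigenvalue $-2$ with eigenfunction $\mathfrak g$, so on the translation-orthogonal complement the quadratic form $\langle\mathcal L_\sigma\rho,\rho\rangle$ is \emph{not} coercive; the decomposition of $u$ only makes $\rho_f$ orthogonal to $\partial_{x_j}\mathfrak g_i$, not to $\mathfrak g_i$. The paper handles this by splitting $\rho_f=\phi_f+\varphi_f$ (Lemmas~\ref{lem0005}--\ref{lem0006}), expanding $\varphi_f$ in the eigenbasis, and estimating the negative-mode coefficients $c_j$ separately (Step~3 of Lemma~\ref{lem0006}), crucially using that the nonnegativity of $u$ gives the nonlinear term $N_{\phi_f}(\varphi_f)\varphi_f$ a favorable one-sided sign structure (Step~1). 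Your claim of ``uniform invertibility'' is defensible as a Banach-space statement, but the subsequent steps in your outline implicitly rely on a positivity/coercivity that does not hold and must be replaced by this more delicate bookkeeping. Finally, your alternative sharpness example (a single Gaussian plus a small multiple of a second-eigenfunction perturbation) does not work: the second eigenspace of $\mathcal L$ is the kernel $\mathrm{span}\{\partial_{x_j}\mathfrak g\}$, so perturbing in that direction merely moves along the manifold $\mathcal M^\nu$ and produces no deficit; the two-Gaussian family $\mathfrak g(\cdot-\tfrac L2 e_1)+\mathfrak g(\cdot+\tfrac L2 e_1)+\rho_L$ with $\rho_L$ the projected corrector (Proposition~\ref{propnn0002}) is the construction that works.
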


\begin{remark}
The main ideas in proving Theorem~\ref{thmnn0001} is similar to that of \cite{DSW2021,WW22}, that is, we choose special $y_{1,\delta},y_{2,\delta},\cdots y_{\nu,\delta}\subset\bbr^d$ and decompose $u-\sum_{i=1}^{\nu}\mathfrak{g}(\cdot-y_{i,\delta})$ into two parts, where the first part is very regular which can be well estimated and the second part is much smaller than the first part.  However, the logarithmic nonlinearity is very different from the power-type one dealt with in \cite{DSW2021,WW22}, thus, we need to careful estimate it to make sure that the strategy in \cite{DSW2021,WW22} work for \eqref{eq0009}.
\end{remark}

\vskip0.12in

\noindent{\bf\large Notations.} Throughout this paper, $C$ and $C'$ are indiscriminately used to denote various absolutely positive constants.  $\sigma,\sigma',\sigma''$ are indiscriminately used to denote various absolutely positive constants which can be taken arbitrary small.  $a\sim b$ means that $C'b\leq a\leq Cb$ and $a\lesssim b$ means that $a\leq Cb$.

\section{Preliminaries}
Let us consider the equation
\begin{eqnarray}\label{eq0004}
-\Delta u+u=2u\log |u|+f,\quad\text{in } \bbr^d,
\end{eqnarray}
where $f\in H^{-1}$ and $u\in H^1(\bbr^d)$ is nonnegative and satisfies
\begin{eqnarray*}
(\nu-\frac12)c_0<\|u\|_{H^1(\bbr^d)}^2<(\nu+\frac12)c_0\quad\text{for some fixed }\nu\in\bbn
\end{eqnarray*}
with $c_0=\|\mathfrak{g}\|_{H^1(\bbr^d)}^2$.  By multiplying \eqref{eq0004} with $u$ and integrating by parts,
it is easy to see that $\int_{\bbr^d}|u|^2|\log|u|^2|dx\lesssim1$.  Moreover,
by Proposition~\ref{prop0001}, there exists $z_{1,f},z_{2,f},\cdots z_{\nu,f}\subset\bbr^d$ such that
\begin{eqnarray*}
\|u-\sum_{i=1}^{\nu}\mathfrak{g}(\cdot-z_{i,f})\|_{H^1(\bbr^d)}\to0\quad\text{as }\|f\|_{H^{-1}}\to0.
\end{eqnarray*}
It follows that
\begin{eqnarray}\label{eq0003}
\mathfrak{c}_f=\inf_{z_i\in\bbr^d}\|u-\sum_{i=1}^\nu\mathfrak{g}(\cdot-z_i)\|_{H^1(\bbr^d)}^2\to0\quad\text{as }\|f\|_{H^{-1}}\to0.
\end{eqnarray}
Thus, by solving the minimizing problem~\eqref{eq0003} in a standard way (cf. \cite{BC88}),
we can write $u=\sum_{i=1}^\nu\mathfrak{g}(\cdot-y_{i,f})+\rho_{f}$, where $\{y_{i,f}\}$ is the solution of \eqref{eq0003} and the remaining term $\rho_f$ satisfies
\begin{eqnarray*}\label{eq0039}
\|\rho_f\|_{H^1(\bbr^d)}^2=\mathfrak{c}_f\to0\quad\text{as }\|f\|_{H^{-1}}\to0
\end{eqnarray*}
and
the orthogonal conditions
\begin{eqnarray}\label{eq0044}
\langle \rho_f, \partial_{x_l}\mathfrak{g}_{j,f}\rangle_{H^1(\bbr^d)}=0,\quad l=1,2,\cdots,d\text{ and }j=1,2,\cdots,\nu.
\end{eqnarray}

\vskip0.12in

For the sake of simplicity, we denote $\mathfrak{g}(\cdot-y_{i,f})$ by $\mathfrak{g}_{i,f}$.  Clearly, by \eqref{eq0004}, $\rho_f$ satisfies
\begin{eqnarray}\label{eq0033}
\left\{\aligned&-\Delta\rho_f+\rho_f=2(\sum_{i=1}^\nu\mathfrak{g}_{i,f}+\rho_{f})\log(\sum_{i=1}^\nu\mathfrak{g}_{i,f}+\rho_{f})
-2\sum_{i=1}^\nu\mathfrak{g}_{i,f}\log\mathfrak{g}_{i,f}
+f,\quad\text{in }H^{-1},\\
&\langle \rho_f, \partial_{x_l}\mathfrak{g}_{i,f}\rangle_{H^1(\bbr^d)}=0,\quad l=1,2,\cdots,d\text{ and }i=1,2,\cdots,\nu.
\endaligned\right.
\end{eqnarray}
It is convenient to write \eqref{eq0033} as follows:
\begin{eqnarray}\label{eq0034}
\left\{\aligned&\mathcal{L}_f(\rho_f)=E+N(\rho_f)+f,\quad\text{in }H^{-1},\\
&\langle \rho_f, \partial_{x_l}\mathfrak{g}_{i,f}\rangle_{H^1(\bbr^d)}=0,\quad l=1,2,\cdots,d\text{ and }i=1,2,\cdots,\nu,
\endaligned\right.
\end{eqnarray}
where
\begin{eqnarray}\label{eq0019}
\mathcal{L}_f=-\Delta-1-2\log(\sum_{i=1}^\nu\mathfrak{g}_{i,f})
\end{eqnarray}
is the linear operator,
\begin{eqnarray}\label{eq0012}
E=2(\sum_{i=1}^\nu\mathfrak{g}_{i,f})\log(\sum_{i=1}^\nu\mathfrak{g}_{i,f})-2\sum_{i=1}^\nu\mathfrak{g}_{i,f}\log\mathfrak{g}_{i,f}
\end{eqnarray}
is the error and
\begin{eqnarray}
N(\rho_f)&=&2(\sum_{i=1}^\nu\mathfrak{g}_{i,f}+\rho_{f})\log(\sum_{i=1}^\nu\mathfrak{g}_{i,f}+\rho_{f})
-2(\sum_{i=1}^\nu\mathfrak{g}_{i,f})\log(\sum_{i=1}^\nu\mathfrak{g}_{i,f})\notag\\
&&-2(1+\log(\sum_{i=1}^\nu\mathfrak{g}_{i,f}))\rho_f\label{eq0031}
\end{eqnarray}
is the nonlinear part.

\section{Proof of (\ref{eqnn0002})}
Let
\begin{eqnarray}\label{eqnn0010}
\eta_{i}=\min_{j\not=i}\eta_{i,j}\quad\text{and}\quad\eta=\min_{i}\eta_{i},
\end{eqnarray}
where $\eta_{i,j}=|y_{j,f}-y_{i,f}|$.
Then by \eqref{eq0004}, \eqref{eq0003} and the uniqueness of $\mathfrak{g}$, we can easy to see that
\begin{eqnarray}\label{eqnn0030}
\eta=\min_{i\not=j}\{|y_{i,f}-y_{j,f}|\}\to+\infty\quad\text{as }\delta\to0,
\end{eqnarray}
where we denote $\|f\|_{H^{-1}}$ by $\delta$ for the sake of simplicity.
Let
\begin{eqnarray}\label{eq0045}
\Omega_{i}=\{x\in \bbr^d\mid \mathfrak{g}_{i,f}\geq\mathfrak{g}_{j,f}\quad\text{for all }j\not=i\}.
\end{eqnarray}
Then $\bbr^d=\cup_{i=1}^{\nu}\Omega_{i}$ and $\sum_{j=1}^\nu\mathfrak{g}_{j,f}\sim\mathfrak{g}_{i,f}$ in $\Omega_{i}$.  Moreover, we also introduce
\begin{eqnarray}\label{eq0075}
\Pi_{c,i,j}: 2(y_{i,f}-y_{j,f})x+c=0\quad\text{and}\quad \mathbb{L}_{i,j}:(x-y_{i,f})\times(y_{i,f}-y_{j,f})=0,
\end{eqnarray}
where $c\in\bbr$ are constants.  We take $x_{c,i,j}\in \Pi_{c.,i,j}\cap\mathbb{L}_{i,j}\cap\Omega_i$ and denote
\begin{eqnarray}\label{eqnn0003}
|x_{c,i,j}-y_{i,f}|=\pm\alpha_{c,i,j}\eta_{i,j},
\end{eqnarray}
where $\alpha_{c,i,j}>-\frac12$ with
\begin{eqnarray*}
\alpha_{c,i,j}\left\{\aligned&>0,\quad \langle x_{c,i,j}-y_{i,f}, y_{i,f}-y_{j,f}\rangle>0,\\
&<0,\quad \langle x_{c,i,j}-y_{i,f}, y_{i,f}-y_{j,f}\rangle<0.\endaligned\right.
\end{eqnarray*}
Then
\begin{eqnarray*}
|x_{c,i,j}-y_{j,f}|=(1+\alpha_{c,i,j})\eta_{i,j}.
\end{eqnarray*}
\begin{lemma}\label{lem0001}
We have,
\begin{eqnarray}
E&\sim&\sum_{i=1}^\nu\chi_{\Omega_i}\mathfrak{g}_{i,f}(\sum_{j\not=i}\chi_{\Pi_{c,i,j}}e^{-(\alpha_{c,i,j}+\frac12)\eta_{i,j}^2}\log(1+e^{(\alpha_{c,i,j}+\frac12)\eta_{i,j}^2}))\label{eq0017}
\end{eqnarray}
as $\|f\|_{H^{-1}}\to0$.
\end{lemma}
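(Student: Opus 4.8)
The plan is to prove the equivalence \eqref{eq0017} \emph{pointwise}, region by region, since $\bbr^d=\cup_{i=1}^\nu\Omega_i$ up to a null set and the factors $\chi_{\Omega_i}$ on the right-hand side merely select the region $\Omega_i$ containing a given point. Fix $i$ and restrict attention to $x\in\Omega_i$, where $\mathfrak{g}_{j,f}\le\mathfrak{g}_{i,f}$ for every $j\not=i$. Set $r_i=\sum_{j\not=i}\mathfrak{g}_{j,f}/\mathfrak{g}_{i,f}\in(0,\nu-1]$ and $t_{i,j}=\log(\mathfrak{g}_{i,f}/\mathfrak{g}_{j,f})=\frac12(|x-y_{j,f}|^2-|x-y_{i,f}|^2)\ge0$ on $\Omega_i$, so that $\sum_{k=1}^\nu\mathfrak{g}_{k,f}=\mathfrak{g}_{i,f}(1+r_i)$ and $\mathfrak{g}_{j,f}=\mathfrak{g}_{i,f}e^{-t_{i,j}}$. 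Substituting $\log\big(\sum_k\mathfrak{g}_{k,f}\big)=\log\mathfrak{g}_{i,f}+\log(1+r_i)$ into \eqref{eq0012} and using $\mathfrak{g}_{i,f}r_i=\sum_{j\not=i}\mathfrak{g}_{j,f}$, the terms $2\mathfrak{g}_{i,f}\log\mathfrak{g}_{i,f}$ cancel and one is left with the exact identity
\[
E=2\mathfrak{g}_{i,f}\sum_{j\not=i}t_{i,j}e^{-t_{i,j}}+2\mathfrak{g}_{i,f}(1+r_i)\log(1+r_i)\qquad\text{on }\Omega_i,
\]
in which both summands are nonnegative, so no cancellation occurs.

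Next I would replace the scalar functions above by their orders of magnitude, using two elementary comparisons with constants depending only on $\nu$. First, since $(1+r)\log(1+r)/r$ extends continuously and positively to $r=0$ with value $1$, one has $(1+r)\log(1+r)\sim r$ uniformly for $r\in(0,\nu-1]$; hence $2\mathfrak{g}_{i,f}(1+r_i)\log(1+r_i)\sim\mathfrak{g}_{i,f}\sum_{j\not=i}e^{-t_{i,j}}$, and combining with the first summand gives $E\sim\mathfrak{g}_{i,f}\sum_{j\not=i}(1+t_{i,j})e^{-t_{i,j}}$ on $\Omega_i$. Second, for $s\ge0$ one has $s<\log(1+e^s)\le s+\log2$, so $\log(1+e^s)\sim 1+s$ and therefore $(1+s)e^{-s}\sim e^{-s}\log(1+e^s)$; applying this with $s=t_{i,j}(x)\ge0$ yields
\[
E\sim\mathfrak{g}_{i,f}\sum_{j\not=i}e^{-t_{i,j}}\log\big(1+e^{t_{i,j}}\big)\qquad\text{on }\Omega_i .
\]

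It then remains to recognise $t_{i,j}(x)$ in terms of the geometric quantities. The function $t_{i,j}$ is affine and its level sets are precisely the hyperplanes $\Pi_{c,i,j}$ orthogonal to $y_{i,f}-y_{j,f}$, so each $x\in\Omega_i$ lies on exactly one $\Pi_{c,i,j}$, on which $t_{i,j}$ is constant and equal to its value at the point $x_{c,i,j}\in\Pi_{c,i,j}\cap\mathbb{L}_{i,j}$. Parametrising $\mathbb{L}_{i,j}$ by $x_{c,i,j}=y_{i,f}+\alpha_{c,i,j}(y_{i,f}-y_{j,f})$ — which is consistent with the relations $|x_{c,i,j}-y_{i,f}|=\pm\alpha_{c,i,j}\eta_{i,j}$ and $|x_{c,i,j}-y_{j,f}|=(1+\alpha_{c,i,j})\eta_{i,j}$ recorded before the lemma for $\alpha_{c,i,j}>-\frac12$ — a direct computation gives $t_{i,j}(x)=t_{i,j}(x_{c,i,j})=\frac12\big((1+\alpha_{c,i,j})^2-\alpha_{c,i,j}^2\big)\eta_{i,j}^2=(\alpha_{c,i,j}+\frac12)\eta_{i,j}^2>0$. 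Inserting this into the last display and summing over $i$ reproduces exactly \eqref{eq0017}, the symbol $\chi_{\Pi_{c,i,j}}$ encoding the map $x\mapsto e^{-t_{i,j}(x)}\log(1+e^{t_{i,j}(x)})$. The role of \eqref{eqnn0030}, i.e. $\eta\to\infty$ as $\delta\to0$, is only to ensure the centres are distinct and well separated so that $\Omega_i$, $\Pi_{c,i,j}$, $\mathbb{L}_{i,j}$ and $x_{c,i,j}$ are all well defined; the equivalence itself is uniform. I expect the only delicate point to be this last bit of geometric bookkeeping — verifying that the relevant intersection points $x_{c,i,j}$ indeed fall in $\Omega_i$ and that the parameters $\alpha_{c,i,j}$ stay in the admissible range — together with checking throughout that the implied constants in ``$\sim$'' depend only on $d$ and $\nu$.
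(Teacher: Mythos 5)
Your proof is correct and follows essentially the same route as the paper: restrict to each $\Omega_i$, expand $E$ from \eqref{eq0012}, use the elementary equivalences $(1+r)\log(1+r)\sim r$ on $(0,\nu-1]$ and $\log(1+e^s)\sim 1+s$ on $[0,\infty)$ to reduce to $E\sim\mathfrak{g}_{i,f}\sum_{j\ne i}e^{-t_{i,j}}\log(1+e^{t_{i,j}})$ with $t_{i,j}=\log(\mathfrak{g}_{i,f}/\mathfrak{g}_{j,f})$, and then compute $t_{i,j}=(\alpha_{c,i,j}+\tfrac12)\eta_{i,j}^2$ on $\Pi_{c,i,j}\cap\Omega_i$. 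The only cosmetic difference is that the paper splits $E=I+II$ in $\Omega_i$ (isolating the $k=i$ term) and shows $E\sim II$, while you write an exact identity in the variables $r_i,t_{i,j}$; both land on the same intermediate expression $\sum_{j\ne i}\mathfrak{g}_{j,f}\log(1+\mathfrak{g}_{i,f}/\mathfrak{g}_{j,f})$.
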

\begin{proof}
By \eqref{eq0012}, we have
\begin{eqnarray}\label{eq0013}
E=2\sum_{i=1}^\nu\mathfrak{g}_{i,f}\log(1+\sum_{j\not=i}\frac{\mathfrak{g}_{j,f}}{\mathfrak{g}_{i,f}}).
\end{eqnarray}
By \eqref{eq0013}, we write
\begin{eqnarray*}
E&=&2\mathfrak{g}_{i,f}\log(1+\sum_{l\not=i}\frac{\mathfrak{g}_{l,f}}{\mathfrak{g}_{i,f}})
+2\sum_{j\not=i}\mathfrak{g}_{j,f}\log(1+\sum_{l\not=j}\frac{\mathfrak{g}_{l,f}}{\mathfrak{g}_{j,f}})\\
&=:&I+II
\end{eqnarray*}
in $\Omega_{i}$, where $\Omega_i$ is given by \eqref{eq0045}.  For $I$, we have
\begin{eqnarray*}
I\sim\sum_{j\not=i}\mathfrak{g}_{j,f}\quad\text{in }\Omega_i.
\end{eqnarray*}
For $\mathfrak{g}_{j,f}\log(1+\sum_{l\not=j}\frac{\mathfrak{g}_{l,f}}{\mathfrak{g}_{j,f}})$ with $j\not=i$, we write
\begin{eqnarray*}
\mathfrak{g}_{j,f}\log(1+\sum_{l\not=j}\frac{\mathfrak{g}_{l,f}}{\mathfrak{g}_{j,f}})=\mathfrak{g}_{j,f}
\log(1+\frac{\mathfrak{g}_{i,f}+\sum_{l\not=j,i}\mathfrak{g}_{l,f}}{\mathfrak{g}_{j,f}}).
\end{eqnarray*}
Then in $\Omega_i$, we have
\begin{eqnarray}\label{eq0032}
E\sim II\sim\sum_{j\not=i}\mathfrak{g}_{j,f}\log(1+\frac{\mathfrak{g}_{i,f}}{\mathfrak{g}_{j,f}}).
\end{eqnarray}
Let us consider the function
\begin{eqnarray*}
\frac{\mathfrak{g}_{i,f}}{\mathfrak{g}_{j,f}},\quad \forall j\not=i.
\end{eqnarray*}
By \eqref{eq0002}, \eqref{eq0011} and direct calculations, we have
\begin{eqnarray}\label{eq0030}
\nabla(\frac{\mathfrak{g}_{i,f}}{\mathfrak{g}_{j,f}})=(y_{i,f}-y_{j,f})e^{-\frac12(|x-y_{i,f}|^2-|x-y_{j,f}|^2)}.
\end{eqnarray}
Then by \eqref{eq0030},
\begin{eqnarray*}
\frac{\mathfrak{g}_{i,f}}{\mathfrak{g}_{j,f}}\equiv const.\quad\text{on the hyperplane $\Pi_{c,i,j}$ for all $c\in\bbr$},
\end{eqnarray*}
where $\Pi_{c,i,j}$ is given by \eqref{eq0075}.
Thus, we have
\begin{eqnarray}\label{eq0016}
\frac{\mathfrak{g}_{i,f}}{\mathfrak{g}_{j,f}}\equiv e^{(\alpha_{c,i,j}+\frac12)|y_{i,f}-y_{j,f}|^2},\quad \forall x\in\Pi_{c,i,j}\cap\Omega_i,
\end{eqnarray}
where $\alpha_{c,i,j}>-\frac12$ is given by \eqref{eqnn0003}.  It follows from \eqref{eq0032}, \eqref{eq0016} and the fact that $\Pi_{c,i,j}\perp \mathbb{L}_{i,j}$ for all $c\in\bbr$ that \eqref{eq0017} holds.
\end{proof}

\vskip0.12in

As that in \cite{DSW2021,WW22}, we decompose $\rho_f=\phi_f+\varphi_f$, where $\phi_f$ is the solution of the following equation:
\begin{eqnarray}\label{eqnn0034}
\left\{\aligned&\mathcal{L}_f(\phi_f)=E+N(\phi_{f})-\sum_{j=1}^d\sum_{i=1}^{\nu}a_{j,i}\partial_{x_j}\mathfrak{g}_{i,f},\quad\text{in }H^{-1},\\
&\langle \phi_f, \partial_{x_j}\mathfrak{g}\rangle_{H^1(\bbr^d)}=0,\quad l=1,2,\cdots,d
\endaligned\right.
\end{eqnarray}
with $a_{j,i}$ being the Lagrange multipliers given by
\begin{eqnarray*}
a_{j,i}\sim\langle E+N(\phi_{f}), \partial_{x_j}\mathfrak{g}_{i,f}\rangle_{L^2}.
\end{eqnarray*}

\vskip0.12in

To solve \eqref{eqnn0034}, let us first establish a good linear theory.  Let
\begin{eqnarray*}
\mathfrak{g}_{i,f,d-1,j}=e^{\frac{1+d}{2}-\frac{|z^{i,j}|^2}{2}}
\end{eqnarray*}
and
\begin{eqnarray*}
D_{R,i,j}=\{x\in\Pi_{c,i,j}\mid |x_{c,i,j}-\frac{y_{i,f}+y_{j,f}}{2}|\leq R\},
\end{eqnarray*}
where $z^{i,j}\perp \mathbb{L}_{i,j}$ and $x_{c,i,j}$ is given by \eqref{eqnn0003}.  We define
\begin{eqnarray*}
\Omega_{i,j,d-1}=\{x\in\Omega_i\mid\mathfrak{g}_{i,f,d-1,j}\geq\mathfrak{g}_{i,f,d-1,l}, \forall l\not=i,j\}.
\end{eqnarray*}
Then as above, $\Omega_i=\cup_{j\not=i}\Omega_{i,j,d-1}$ and $\sum_{l\not=i}\mathfrak{g}_{i,f,d-1,l}\sim\mathfrak{g}_{i,f,d-1,j}$ in $\Omega_{i,j,d-1}$ for all $j$, where $\Omega_i$ is given by \eqref{eq0045}.
We introduce the norms
\begin{eqnarray*}
\|u\|_{\natural}&=&\sum_{i=1}^{\nu}\sum_{j\not=i}(\sup_{D_{\sigma\eta,i,j}^c\cap\Omega_{i,j,d-1}}\frac{|u|}{\mathfrak{g}_{i,f}^{1-\sigma}}
+\sup_{D_{\sigma\eta,i,j}\cap\Omega_{i,j,d-1}}\frac{|u|}{e^{-\frac{1}{8}(\eta_{i,j}^2-\eta^2)}\mathfrak{g}_{i,f,d-1,j}})
\end{eqnarray*}
and
\begin{eqnarray*}
\|u\|_{\sharp}&=&\sum_{i=1}^{\nu}\sum_{j\not=i}(\sup_{D_{\sigma\eta-1,i,j}^c\cap\Omega_{i,j,d-1}}\frac{|u|}{\mathfrak{g}_{i,f}^{1-\sigma}}
+\sup_{D_{\sigma\eta-1,i,j}\cap\Omega_{i,j,d-1}}\frac{\eta^{2}|u|}{e^{-\frac{1}{8}(\eta_{i,j}^2-\eta^2)}\mathfrak{g}_{i,f,d-1,j}}).
\end{eqnarray*}
Then
\begin{eqnarray*}
\mathbb{X}=\{u\in H^1(\bbr^d)\mid \|u\|_{\sharp}<+\infty\}\quad\text{and}\quad\mathbb{Y}=\{u\in L^2(\bbr^d)\mid \|u\|_{\natural}<+\infty\},
\end{eqnarray*}
are Banach spaces.

\vskip0.12in

Let us consider the following linear equation:
\begin{eqnarray}\label{eqnn0018}
\left\{\aligned&\mathcal{L}_f(\psi)=h-\sum_{i=1}^{\nu}\sum_{j=1}^d b_{j,i}\partial_{x_j}\mathfrak{g}_{i,f}
,\quad\text{in }\bbr^d,\\
&\phi\in\mathbb{X}^{\bot},
\endaligned\right.
\end{eqnarray}
where $\mathcal{L}_f$ is the linear operator given by \eqref{eq0019},
\begin{eqnarray}\label{eq0029}
\mathbb{X}^{\bot}=\{u\in\mathbb{X}\mid \langle u, \partial_{x_j}\mathfrak{g}_{i,f}\rangle_{H^1(\bbr^d)}=0,\quad j=1,2,\cdots,d; \quad i=1,2,\cdots,\nu\}
\end{eqnarray}
and $b_{j,i}$ are the Lagrange multipliers given by
\begin{eqnarray}\label{eq0028}
b_{j,i}\sim\langle h, \partial_{x_j}\mathfrak{g}_{i,f}\rangle_{L^2}.
\end{eqnarray}
\begin{lemma}\label{lem0004}
As $\|f\|_{H^{-1}}\to0$, \eqref{eqnn0018} is unique solvable for every $h\in\mathbb{Y}$ with $\|\psi\|_{\sharp}+\sum_{i=1}^{\nu}\sum_{j=1}^d |b_{j,i}|\lesssim\|h\|_{\natural}$.
\end{lemma}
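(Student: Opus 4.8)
The plan is to prove Lemma~\ref{lem0004} in two stages. The first and main stage is an \emph{a priori estimate}: any solution $(\psi,\{b_{j,i}\})$ of \eqref{eqnn0018} with $\psi\in\mathbb{X}^{\perp}$ satisfies $\|\psi\|_{\sharp}+\sum_{i,j}|b_{j,i}|\lesssim\|h\|_{\natural}$ uniformly as $\|f\|_{H^{-1}}\to0$. The second stage, existence, then follows from a routine Fredholm-type argument using the a priori bound, and uniqueness is immediate by applying the a priori bound to the difference of two solutions. I would first record the elementary pairing estimate $|\langle h,\partial_{x_j}\mathfrak{g}_{i,f}\rangle_{L^2}|\lesssim\|h\|_{\natural}$ (since the $\natural$-norm forces $|h|\lesssim\|h\|_{\natural}\,\mathfrak{g}_{i,f}^{1-\sigma}$ away from the interaction sets and an exponentially small bound inside them, while $|\partial_{x_j}\mathfrak{g}_{i,f}|$ is Gaussian); by \eqref{eq0028} this already gives $\sum_{i,j}|b_{j,i}|\lesssim\|h\|_{\natural}$ and shows the multiplier terms are negligible.

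For the a priori estimate on $\psi$ I would argue by contradiction, assuming $f_n$ with $\|f_n\|_{H^{-1}}\to0$, data $h_n\in\mathbb{Y}$, and solutions $\psi_n\in\mathbb{X}^{\perp}$ of \eqref{eqnn0018} with $\|\psi_n\|_{\sharp}=1$ but $\|h_n\|_{\natural}\to0$. From $\|\psi_n\|_{\sharp}=1$ one gets a uniform pointwise (hence $L^\infty_{loc}$) bound on $\psi_n$ near each center $y_{i,f_n}$, and since the potential of $\mathcal{L}_{f_n}$ is locally bounded and the right-hand side of \eqref{eqnn0018} tends to $0$ locally, interior elliptic estimates give $C^{1,\alpha}_{loc}$ compactness. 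Because $\eta\to+\infty$ by \eqref{eqnn0030}, the other bubbles and the interaction regions escape to infinity, so $\mathcal{L}_{f_n}(\cdot+y_{i,f_n})$ converges locally to $\mathcal{L}=-\Delta+|x|^2-(d+2)$ (recall $\log\mathfrak{g}_{i,f}=\tfrac{1+d}{2}-\tfrac{|x-y_{i,f}|^2}{2}$), whence the weak limit $\psi_{\infty,i}$ of $\psi_n(\cdot+y_{i,f_n})$ solves $\mathcal{L}\psi_{\infty,i}=0$. By the nondegeneracy of $\mathfrak{g}$, $\psi_{\infty,i}\in\mathrm{span}\{\partial_{x_k}\mathfrak{g}\}$; passing the conditions $\langle\psi_n,\partial_{x_j}\mathfrak{g}_{i,f_n}\rangle_{H^1(\bbr^d)}=0$ to the limit (the tails being uniformly controlled by the $\sharp$-norm through Gaussian weights) yields $\psi_{\infty,i}\perp\partial_{x_j}\mathfrak{g}$ in $H^1$, hence $\psi_{\infty,i}\equiv0$ for every $i$.

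The remaining, and most delicate, point is to upgrade this local triviality to $\|\psi_n\|_{\sharp}\to0$, which contradicts $\|\psi_n\|_{\sharp}=1$. Since for each $n$ one of the two suprema in the definition of $\|\cdot\|_{\sharp}$ is $\gtrsim1$, it suffices to run a maximum-principle/barrier argument in each of the two regions. On $D_{\sigma\eta-1,i,j}^{c}\cap\Omega_{i,j,d-1}$ the potential $-1-2\log(\sum_i\mathfrak{g}_{i,f})$ is large and positive (it behaves like $+|x-y_{i,f}|^2$), so comparing $|\psi_n|$ with a supersolution of the form $C\mathfrak{g}_{i,f}^{1-\sigma}$ — checking $\mathcal{L}_f(\mathfrak{g}_{i,f}^{1-\sigma})\gtrsim\mathfrak{g}_{i,f}^{1-\sigma}$ up to the small perturbation of the potential coming from the other bubbles and up to the $h_n$ and multiplier terms — together with the vanishing of the local limit on the inner boundary, forces $\sup\frac{|\psi_n|}{\mathfrak{g}_{i,f}^{1-\sigma}}\to0$. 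On $D_{\sigma\eta-1,i,j}\cap\Omega_{i,j,d-1}$ I would use barriers modeled on $e^{-\frac18(\eta_{i,j}^2-\eta^2)}\mathfrak{g}_{i,f,d-1,j}$, exploiting that on the hyperplanes $\Pi_{c,i,j}$ transverse to $\mathbb{L}_{i,j}$ the Gaussian factorizes into a $(d-1)$-dimensional Gaussian in the variable $z^{i,j}$ times a controlled factor; the one free direction along $\mathbb{L}_{i,j}$ is precisely what produces the extra $\eta^2$ gain built into $\|\cdot\|_{\sharp}$ relative to $\|\cdot\|_{\natural}$. Combining the two estimates gives the contradiction.

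With the a priori estimate in hand (and it holds equally for the corresponding problems posed on large balls $B_R$ with zero Dirichlet data, where the potential is bounded and $\mathcal{L}_f$ is a compact perturbation of an invertible operator, hence Fredholm of index zero and, by the a priori bound for $R$ large and $f$ small, injective on the relevant orthogonal complement), the solutions $\psi_R$ exist with weighted bounds uniform in $R$; a diagonal extraction then yields a solution on $\bbr^d$ lying in $\mathbb{X}^{\perp}$ and satisfying $\|\psi\|_{\sharp}+\sum_{i,j}|b_{j,i}|\lesssim\|h\|_{\natural}$, as claimed. I expect the main obstacle to be the barrier construction and patching near the interaction regions so that the comparison reproduces exactly the weights and the $\eta^{2}$ factor encoded in the norms: the logarithmic potential $2\log(\sum_i\mathfrak{g}_{i,f})$ makes the supersolution inequalities more subtle than in the power-type setting of \cite{DSW2021,WW22}, and each such inequality has to be verified carefully, as the Remark already signals.
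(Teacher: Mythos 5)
Your plan reproduces the paper's proof in all essentials: the same two supersolutions ($\mathfrak{g}_{i,f}^{1-\sigma}$ away from the interaction zones and $e^{-\frac18(\eta_{i,j}^2-\eta^2)}\mathfrak{g}_{i,f,d-1,j}$ inside them, matched to the two pieces of the $\sharp$-norm), the same blow-up/compactness argument combined with the nondegeneracy of $\mathfrak{g}$ and the orthogonality conditions to kill the local limit, and the Fredholm alternative for existence, with the blow-up and barrier steps merely run in the opposite order. The one step you leave implicit — patching the two barriers across the transition annulus $D_{\sigma\eta,i,j}\setminus D_{\sigma\eta-1,i,j}$ so as to recover the full $\eta^{2}$ gain of the $\sharp$-norm — is exactly what the paper carries out by gluing them with a one-dimensional interpolant $\varphi(\alpha_{c,i,j})$ solving a Neumann ODE along $\mathbb{L}_{i,j}$, so no additional idea is missing.
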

\begin{proof}
By \eqref{eq0002} and \eqref{eq0011}, it is easy to see that for $R>0$ sufficiently large,
\begin{eqnarray*}
-(1+2\log(\sum_{j=1}^{\nu}\mathfrak{g}_{j,f}))\geq0
\end{eqnarray*}
in $(\cup_{j=1}^{\nu}B_{R}(y_{j,f}))^c$.
It follows from $\sum_{j=1}^{\nu}\mathfrak{g}_{j,f}\sim\mathfrak{g}_{i,f}$ in $\Omega_i$ that
\begin{eqnarray}\label{eq0025}
&-(1+2\log(\sum_{j=1}^{\nu}\mathfrak{g}_{j,f}))=|x-y_{i,f}|^2+O(1)\quad\text{in }D_{\sigma\eta+2,i,j}\cap\Omega_i.
\end{eqnarray}
Note that by the definition of $\mathbb{L}_{i,j}$ given by \eqref{eq0075} and rotations, $\mathfrak{g}_{i,f,d-1,j}$ is the unique solution of \eqref{eq0009} in $\bbr^{d-1}$.  Thus, by \eqref{eq0025}, rotations, the definitions of $\Pi_{c,i,j}$ and $\mathbb{L}_{i,j}$ given by \eqref{eq0075},
\begin{eqnarray}\label{eqnn0909}
-\Delta\mathfrak{g}_{i,f,d-1,j}-(1+2\log(\sum_{j=1}^{\nu}\mathfrak{g}_{j,f}))\mathfrak{g}_{i,f,d-1,j}
\gtrsim \eta^2\mathfrak{g}_{i,f,d-1,j}
\end{eqnarray}
in $D_{\sigma\eta+2,i,j}\cap\Omega_{i,j,d-1}$.  Similarly,
\begin{eqnarray}\label{eqnn0910}
-\Delta\mathfrak{g}_{i,f}^{1-\sigma}-(1+2\log(\sum_{j=1}^{\nu}\mathfrak{g}_{j,f}))\mathfrak{g}_{i,f}^{1-\sigma}
\gtrsim \mathfrak{g}_{i,f}^{1-\sigma}
\end{eqnarray}
in $(\cup_{j=1}^{\nu}B_{R}(y_{j,f}))^c$.  For every $x\in\Omega_{i,j,d-1}$, by the fact that $\Pi_{c,i,j}\perp \mathbb{L}_{i,j}$ for all $c\in\bbr$, we can re-write $x=(\alpha_{c,i,j}, z^{i,j})$, where $\alpha_{c,i,j}>-\frac12$ is given by \eqref{eqnn0003} and $z^{i,j}\perp \mathbb{L}_{i,j}$.  Now, let
\begin{eqnarray*}
\phi(\alpha_{c,i,j}, z^{i,j})=\mathfrak{g}_{i,f}^{1-\sigma}\varphi(\alpha_{c,i,j})+\eta^{-2}e^{-\frac{1}{8}(\eta_{i,j}^2-\eta^2)}\mathfrak{g}_{i,f,d-1,j}(1-\varphi(\alpha_{c,i,j}))
\end{eqnarray*}
where $\varphi(\alpha_{c,i,j})$ is the unique solution of the following equation:
\begin{eqnarray*}
\left\{\aligned&-\varphi''-\varphi'+\varphi=1,\quad\text{in }(\frac{\sigma\eta-1}{\eta_{i,j}}-\frac{1}{2}, \frac{\sigma\eta}{\eta_{i,j}}-\frac{1}{2}),\\
&\varphi'(\frac{\sigma\eta-1}{\eta_{i,j}}-\frac{1}{2})=\varphi'(\frac{\sigma\eta}{\eta_{i,j}}-\frac{1}{2})=0.
\endaligned\right.
\end{eqnarray*}
Then by \eqref{eqnn0909} and \eqref{eqnn0910}, in $(\cup_{j=1}^{\nu}B_{R}(y_{j,f}))^c$,
\begin{eqnarray*}
-\Delta\phi-(1+2\log(\sum_{j=1}^{\nu}\mathfrak{g}_{j,f}))\phi\gtrsim\left\{\aligned&e^{-\frac{1}{8}(\eta_{i,j}^2-\eta^2)}\mathfrak{g}_{i,f,d-1,j},\quad\alpha_{c,i,j}\leq\frac{\sigma\eta-1}{\eta_{i,j}}-\frac{1}{2},\\
&\mathfrak{g}_{i,f}^{1-\sigma},\quad\alpha_{c,i,j}\geq\frac{\sigma\eta}{\eta_{i,j}}-\frac{1}{2},
\endaligned\right.
\end{eqnarray*}
which, together with the maximum principle, implies that
\begin{eqnarray}\label{eq0023}
|\psi|\lesssim (\|h\|_{\natural}+\|\psi\|_{L^{\infty}(\partial (\cup_{j=1}^{\nu}B_{R}(y_{j,f}))^c})\phi(\alpha_{c,i,j}, z^{i,j})
\end{eqnarray}
in $(\cup_{j=1}^{\nu}B_{R}(y_{j,f}))^c$ for $R>0$ sufficiently large.
Based on the a-priori estimate~\eqref{eq0023}, we shall prove the a-priori estimate
\begin{eqnarray}\label{eq0026}
\|\psi\|_{\sharp}\lesssim\|h\|_{\natural}\quad\text{uniformly as }\delta\to0.
\end{eqnarray}
Since the proofs for \eqref{eq0026}, based on the blow-up arguments, are standard nowadays (cf. \cite{LNW07,MPW12,WW22}), we only sketch it here.  We assume the contrary that there exists $\delta_n\to0$, $\{\psi_n\}$ solves \eqref{eqnn0018} with $\{h_n\}\subset L^2(\bbr^d)$ satisfying $\|\psi_n\|_{\sharp}=1$ and $\|h_n\|_{\natural}=o_n(1)$ as $n\to\infty$.  Since $\delta_n\to0$, by \eqref{eqnn0030},
\begin{eqnarray*}
\log(\sum_{j=1}^{\nu}\mathfrak{g}_{j,f}(x+y_{i,f}))\to\log\mathfrak{g}=-\frac{1}{2}|x|^2+\frac{d+1}{2}
\end{eqnarray*}
in $\bbr^d$ as $n\to\infty$.  Now, let
\begin{eqnarray*}
\psi_{i,n}(x)=\phi_n(x+y_{i,f_n}),
\end{eqnarray*}
then by $\delta_n\to0$ as $n\to\infty$ and \eqref{eq0028}, it is standard to prove that $\psi_{i,n}\to \overline{\psi}_{i}$ uniformly in every compact set of $\bbr^d$ for every $i$, where $\overline{\psi}_{i}$ are bounded solutions of the following equation
\begin{eqnarray*}
-\Delta\overline{\phi}+(|x|^2-d-2)\overline{\phi}=0\quad\text{in }\bbr^d.
\end{eqnarray*}
By \cite[Theorem~1.3]{dAMS14} (see also \cite[Theorem~7.5]{GS11}), $\overline{\psi}_i=\sum_{j=1}^d a_{j}\partial_{x_j}\mathfrak{g}$.  On the other hand, by \eqref{eq0002} and \eqref{eq0011}, we can pass to the limit in the orthogonal conditions of $\psi_{i,n}$ given by \eqref{eq0029}, which implies that
\begin{eqnarray*}
0&=&\lim_{n\to+\infty}\langle \psi_{i,n}, \partial_{x_j}\mathfrak{g}\rangle_{H^1(\bbr^d)}\\
&=&\lim_{n\to+\infty}\int_{\bbr^d}(2+2\log(\sum_{j=1}^{\nu}\mathfrak{g}_{j,f_n}))\psi_n\partial_{x_j}\mathfrak{g}_{i,f_n}dx\\
&=&\int_{\bbr^d}(3+d-|x|^2)\overline{\psi}_{i}\partial_{x_j}\mathfrak{g}dx\\
&=&\langle \overline{\psi}_{i}, \partial_{x_j}\mathfrak{g}\rangle_{H^1(\bbr^d)}\\
\end{eqnarray*}
for all $j=1,2,\cdots,d$.  Since
\begin{eqnarray*}
\langle \partial_{x_i}\mathfrak{g}, \partial_{x_j}\mathfrak{g}\rangle_{H^1(\bbr^d)}=\int_{\bbr^d}(d+3+|x|^2)\partial_{x_i}\mathfrak{g}\partial_{x_j}\mathfrak{g}dx=0,
\end{eqnarray*}
we have $\overline{\psi}_{i}=0$, which implies that $\psi_{i,n}\to 0$ uniformly in every compact set of $\bbr^d$ for all $i$.  Thus, by \eqref{eq0023}, we have $\|\psi_n\|_{\natural}=o_n(1)$ which is a contradiction.
Thanks to the a-priori estimate~\eqref{eq0026}, by the Fredholm alternative, we know that the linear equation~\eqref{eqnn0018} is unique solvable for all $h\in \mathbb{Y}$.  The estimate $\sum_{i=1}^{\nu}\sum_{j=1}^d |b_{j,i}|\lesssim\|h\|_{\natural}$ comes from \eqref{eqnn0010} and \eqref{eq0028}.
\end{proof}

\vskip0.12in

By direct calculations,
\begin{eqnarray}\label{eqnnnn0001}
\max_{\alpha_{c,i,j}>-\frac12}e^{-(\frac{\alpha_{c,i,j}^2}{2}+\alpha_{c,i,j}+\frac12)\eta_{i,j}^2}\log(1+e^{(\alpha_{c,i,j}+\frac12)\eta_{i,j}^2})\sim e^{-\frac{1}{8}\eta_{i,j}^2}.
\end{eqnarray}
Thus, by taking $\sigma>0$ sufficiently small and Lemma~\ref{lem0001}, we have $\|E\|_{\natural}\lesssim e^{-\frac{1}{8}\eta^2}$.  We define
\begin{eqnarray}\label{eqnn0100}
\mathbb{B}=\{\phi\in\mathbb{X}^{\perp}\mid \|\phi\|_{\sharp}\leq M e^{-\frac{1}{8}\eta^2}\}
\end{eqnarray}
where $M>0$ is a sufficiently large constant.
\begin{lemma}\label{lem0005}
There exists $M>0$ sufficiently large such that \eqref{eqnn0034} has a unique solution $\phi_f\in \mathbb{B}$ with $\|\phi_f\|_{\sharp}+\sum_{j=1}^{d}\sum_{i=1}^{\nu}|a_{j,i}|\lesssim e^{-\frac{1}{8}\eta^2}$ as $\|f\|_{H^{-1}}\to0$.  Moreover, $\|\phi_f\|_{H^1(\bbr^d)}\lesssim e^{-\frac{1}{8}\eta^2}$ as $\|f\|_{H^{-1}}\to0$.
\end{lemma}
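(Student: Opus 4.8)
The plan is to produce $\phi_f$ as a fixed point of the map $T$ built from the linear theory of Lemma~\ref{lem0004}. Given $\phi\in\mathbb{B}$, let $T(\phi)\in\mathbb{X}^{\perp}$ be the unique solution of \eqref{eqnn0018} with datum $h=E+N(\phi)$; then the Lagrange multipliers $b_{j,i}$ furnished by Lemma~\ref{lem0004} are exactly the coefficients $a_{j,i}$ appearing in \eqref{eqnn0034}, so that any fixed point of $T$ solves \eqref{eqnn0034}. Lemma~\ref{lem0004} gives
\[
\|T(\phi)\|_{\sharp}\lesssim\|E+N(\phi)\|_{\natural}\leq\|E\|_{\natural}+\|N(\phi)\|_{\natural},
\]
and we have already recorded $\|E\|_{\natural}\lesssim e^{-\frac18\eta^2}$ from Lemma~\ref{lem0001} together with \eqref{eqnnnn0001}. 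Hence the whole lemma reduces to a good bound for the nonlinear remainder $N(\phi)$ of \eqref{eq0031} in the $\natural$-norm.

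The estimates I would establish are
\[
\|N(\phi)\|_{\natural}\leq\epsilon\,\|\phi\|_{\sharp},\qquad\|N(\phi_1)-N(\phi_2)\|_{\natural}\leq\epsilon\,\|\phi_1-\phi_2\|_{\sharp}\quad\text{for }\phi,\phi_1,\phi_2\in\mathbb{B},
\]
where $\epsilon$ can be made arbitrarily small by first choosing the parameter $\sigma$ in the definition of $\|\cdot\|_{\natural},\|\cdot\|_{\sharp}$ small and then taking $\delta=\|f\|_{H^{-1}}$ small (so that $\eta\to+\infty$ by \eqref{eqnn0030} and the radius of $\mathbb{B}$ tends to $0$). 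Granting these, $T$ maps $\mathbb{B}$ into itself once $M$ is fixed larger than twice the implied constant in $\|E\|_{\natural}\lesssim e^{-\frac18\eta^2}$ and $\delta$ is small enough that $\epsilon\leq\frac12$, and $T$ is a contraction on $\mathbb{B}$; the Banach fixed point theorem then produces a unique $\phi_f\in\mathbb{B}$. The bound $\|\phi_f\|_{\sharp}\lesssim e^{-\frac18\eta^2}$ is precisely the self-map estimate, and $\sum_{j,i}|a_{j,i}|\lesssim e^{-\frac18\eta^2}$ follows from $a_{j,i}\sim\langle E+N(\phi_f),\partial_{x_j}\mathfrak{g}_{i,f}\rangle_{L^2}$ combined with the pairing bound $|\langle v,\partial_{x_j}\mathfrak{g}_{i,f}\rangle_{L^2}|\lesssim\|v\|_{\natural}$, which holds since $\partial_{x_j}\mathfrak{g}_{i,f}$ has Gaussian decay and so integrates against both weights defining $\|\cdot\|_{\natural}$.

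The main obstacle is exactly the pair of nonlinear estimates, because the logarithmic nonlinearity behaves very differently from the power nonlinearity of \cite{DSW2021,WW22}. I would argue pointwise, writing $N(\phi)=F(g+\phi)-F(g)-F'(g)\phi$ with $F(t)=2t\log t$ and $g=\sum_{i=1}^{\nu}\mathfrak{g}_{i,f}$, and split $\bbr^d$ according to the size of $|\phi|/g$. Where $|\phi|\leq\frac12 g$, one uses $F''(t)=2/t$ and Taylor's theorem to get the genuinely quadratic bound $|N(\phi)|\lesssim\phi^2/g$, which in the $\natural$-norm costs $\lesssim\|\phi\|_{\sharp}^2\lesssim Me^{-\frac18\eta^2}\|\phi\|_{\sharp}$ after invoking the envelope $|\phi|\lesssim\|\phi\|_{\sharp}\mathfrak{g}_{i,f}^{1-\sigma}$. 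In the complementary region $|\phi|\gtrsim g$ — which, by the same envelope and the exponential smallness of $\|\phi\|_{\sharp}$, sits very deep in the Gaussian tails — one expands $F(g+\phi)\approx F(\phi)$ and exploits the near cancellation between the quadratic exponents in $\log\phi$ and in $F'(g)=2(1+\log g)$: what survives is of order $(\sigma|x-y_{i,f}|^{2}+2\log\|\phi\|_{\sharp})|\phi|$, and the negative gain $\log\|\phi\|_{\sharp}\leq\log M-\frac18\eta^{2}$ together with the smallness of $\sigma$ absorbs it against the weight. The difference estimate is handled identically through $N(\phi_1)-N(\phi_2)=\int_0^1\big(F'(g+\phi_2+t(\phi_1-\phi_2))-F'(g)\big)\,dt\,(\phi_1-\phi_2)$ and the same case split.

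Finally, for the $H^1$-bound one tests \eqref{eqnn0034} against $\phi_f$: the quadratic form $\int_{\bbr^d}|\nabla\phi_f|^{2}-\int_{\bbr^d}(1+2\log g)\phi_f^{2}$ is coercive on $\mathbb{X}^{\perp}$ — indeed it controls a weighted $H^1$-norm — by \eqref{eq0025}, the nondegeneracy of $\mathfrak{g}$, and the orthogonality conditions, while on the other side $E$, $N(\phi_f)$ and $\partial_{x_j}\mathfrak{g}_{i,f}$ are all square-integrable with $\|\cdot\|_{L^2}\lesssim\|\cdot\|_{\natural}$, so the right-hand side pairs with $\phi_f$ with a bound $\lesssim\big(\|E\|_{\natural}+\|N(\phi_f)\|_{\natural}+\sum_{j,i}|a_{j,i}|\big)\|\phi_f\|_{H^1(\bbr^d)}$. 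Dividing gives $\|\phi_f\|_{H^1(\bbr^d)}\lesssim e^{-\frac18\eta^2}$, which completes the plan.
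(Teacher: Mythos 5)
Your overall plan — produce $\phi_f$ as a fixed point of the map built from the linear theory of Lemma~\ref{lem0004}, using $\|E\|_{\natural}\lesssim e^{-\eta^2/8}$ from Lemma~\ref{lem0001} and \eqref{eqnnnn0001}, and then read the $H^1$-bound off the weak form — is exactly the framework the paper follows. The divergence is in how the nonlinear remainder $N(\phi)$ is controlled. The paper Taylor-expands to the first order in the form of \eqref{eq0080}, so $N(\phi_\pm)=\pm2\log\bigl(1\pm\theta\phi_\pm/\sum_j\mathfrak{g}_{j,f}\bigr)\phi_\pm$, and then the single estimate \eqref{eqnnn0080} shows that for every $\phi\in\mathbb{B}$ the ratio $|\phi_\pm|/\sum_j\mathfrak{g}_{j,f}$ is $\lesssim\eta^{-2}$; this makes the logarithmic factor uniformly $O(\eta^{-2})$ and gives $\|N(\phi)\|_{\natural}\lesssim\eta^{-2}e^{-\eta^2/8}$ in one stroke, with no case split at all. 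In other words, the a priori control encoded in $\mathbb{B}$ already makes your ``$|\phi|\gtrsim g$'' regime irrelevant, and spotting that is the one genuinely nontrivial ingredient you are missing. Your two-region decomposition instead forces you deep into the Gaussian tails, where your sketch — that $(\sigma|x-y_{i,f}|^2+2\log\|\phi\|_\sharp)|\phi|$ is ``absorbed against the weight'' — is not justified as stated: $\sigma|x-y_{i,f}|^2$ grows without bound, while the weight $\mathfrak{g}_{i,f}^{1-\sigma}$ in $\|\cdot\|_{\natural}$ is identical to the one already saturating $|\phi|$ in $\|\cdot\|_{\sharp}$, so there is no spare decay to pay for the polynomial factor unless one silently enlarges the $\sigma$ used in $\|\cdot\|_{\natural}$; the paper's route via \eqref{eqnnn0080} avoids this entirely.

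A second, smaller point: for the $H^1$-bound you invoke coercivity of $\int|\nabla\phi_f|^2-\int(1+2\log g)\phi_f^2$ on $\mathbb{X}^{\perp}$. That space, as defined in \eqref{eq0029}, is orthogonal only to the kernel directions $\partial_{x_j}\mathfrak{g}_{i,f}$ and not to the negative mode $\mathfrak{g}_{i,f}$ of $\mathcal{L}$, so this quadratic form is not coercive there. Fortunately coercivity is not needed: the paper simply multiplies \eqref{eqnn0034} by $\phi_f$, integrates by parts, and uses the membership $\phi_f\in\mathbb{B}$ (i.e.\ the pointwise $\sharp$-bound) to estimate every term — including $\int(1+2\log g)\phi_f^2$ — directly by $e^{-\eta^2/4}$; you should replace the coercivity step by this direct computation. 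With those two corrections your argument collapses to the paper's.
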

\begin{proof}
The proof is standard nowadays, so we also sketch it here.  For $\phi\in \mathbb{B}$, by \eqref{eq0031} and the Taylor expansion,
\begin{eqnarray}\label{eq0080}
N(\phi_+)=2\log(1+\frac{\theta \phi_-}{\sum_{j=1}^{\nu}\mathfrak{g}_{j,f}})\phi_+\text{ and }N(\phi_-)=-2\log(1-\frac{\theta \phi_-}{\sum_{j=1}^{\nu}\mathfrak{g}_{j,f}})\phi_-,
\end{eqnarray}
in $\Omega_{i}$, where $\Omega_i$ is given by \eqref{eq0045} and $\phi_{\pm}=\max\{\pm\phi, 0\}$.  Note that for $\phi\in \mathbb{B}$,
\begin{eqnarray}
\frac{\pm\phi_\pm}{\sum_{j=1}^{\nu}\mathfrak{g}_{j,f}}
&\lesssim&\|\phi\|_{\sharp}\sum_{i=1}^{\nu}\sum_{j\not=i}(\sup_{D_{\sigma\eta-1,i,j}^c\cap\Omega_i}\mathfrak{g}_{i,f}^{-\sigma}
+\sup_{D_{\sigma\eta-1,i,j}\cap\Omega_{i,j,d-1}}\frac{e^{-\frac{1}{8}(\eta_{i,j}^2-\eta^2)}\mathfrak{g}_{i,f,d-1,j}}{\eta^{2}\mathfrak{g}_{i,f}})\notag\\
&\lesssim&\eta^{-2}.\label{eqnnn0080}
\end{eqnarray}
Thus, by \eqref{eq0080} and the symmetry of $\mathfrak{g}_{i,f}$,
\begin{eqnarray*}
\|N(\phi_\pm)\|_{\natural}&\lesssim&\eta^{-2}\sum_{i=1}^{\nu}\sum_{j\not=i}(\sup_{D_{\sigma\eta,i,j}^c\cap\Omega_i}\frac{|\phi_\pm|}{\mathfrak{g}_{i,f}^{1-\sigma}}
+\sup_{D_{\sigma\eta,i,j}\cap\Omega_{i,j,d-1}}\frac{|\phi_\pm|}{e^{-\frac{1}{8}(\eta_{i,j}^2-\eta^2)}\mathfrak{g}_{i,f,d-1,j}})\notag\\
&\lesssim&\eta^{-2}e^{-\frac{1}{8}\eta^2},
\end{eqnarray*}
which implies that $\|N(\phi)\|_{\natural}\lesssim \eta^{-2}e^{-\frac{1}{8}\eta^2}$.  Now, we can solve \eqref{eq0070} by the standard fix-point arguments in $\mathbb{B}$ by choosing a sufficiently large $M>0$.  The estimate $\|\phi_f\|_{\sharp}+\sum_{j=1}^{d}\sum_{i=1}^{\nu}|a_{j,i}|\lesssim e^{-\frac{1}{8}\eta^2}$ comes from Lemma~\ref{lem0004}.
By multiplying \eqref{eqnn0034} with $\phi_f$ on both sides and integrating by parts and using the fact that $\phi_f\in\mathbb{B}$, we have $\|\phi_f\|_{H^1(\bbr^d)}^2\lesssim e^{-\frac{1}{4}\eta^2}$ as $\|f\|_{H^{-1}}\to0$
which completes the proof.
\end{proof}

\vskip0.12in

We recall that by \eqref{eq0034} and \eqref{eqnn0034}, the remaining term $\varphi_f=\rho_f-\phi_f$ satisfies
\begin{eqnarray}\label{eqnnn0034}
\left\{\aligned&\mathcal{L}_f(\varphi_f)=N(\phi_f+\varphi_f)-N(\phi_{f})+\sum_{j=1}^d\sum_{i=1}^{\nu}a_{j,i}\partial_{x_j}\mathfrak{g}_{i,f}+f,\quad\text{in }H^{-1},\\
&\langle \varphi_f, \partial_{x_j}\mathfrak{g}_{i,f}\rangle_{H^1(\bbr^d)}=0,\quad j=1,2,\cdots,d\quad\text{and}\quad i=1,2,\cdots,\nu.
\endaligned\right.
\end{eqnarray}
Moreover, it is well known (cf. \cite[Theorem~7.5 and Remark~7.7]{GS11}) that the eigenfunctions of the linear operator $\mathcal{L}$ given by \eqref{eq0043} forms an orthogonal basis in $L^2(\bbr^d)$, where the first eigenvalue is $-2$ with eigenspace $span\{\mathfrak{g}\}$ and the second eigenvalue is $0$ with eigenspace $span\{\partial_{x_j}\mathfrak{g}\}$.  Thus, we can write
\begin{eqnarray}\label{eq0050}
\varphi_f=\sum_{j=1}^{\nu}(c_j\mathfrak{g}_{j,f}+\sum_{l=1}^db_{l,j}\partial_{x_l}\mathfrak{g}_{j,f})+\varphi_f^{\perp},
\end{eqnarray}
where $\varphi_{f}^{\perp}$ is orthogonal to $span_{j,l}\{\mathfrak{g}_{j,f}, \partial_{x_l}\mathfrak{g}_{j,f}\}$ in $L^2(\bbr^d)$.
\begin{lemma}\label{lem0002}
As $\|f\|_{H^{-1}}\to0$, we have
\begin{eqnarray}\label{eq0069}
\|\varphi_f\|_{L^2(\bbr^d)}^2\sim\sum_{j=1}^\nu|c_j|^2+\|\varphi_f^{\perp}\|_{L^2(\bbr^d)}^2.
\end{eqnarray}
\end{lemma}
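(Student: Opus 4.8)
The plan is to compare $\|\varphi_f\|_{L^2(\bbr^d)}$ with the sizes of the coefficients in the splitting \eqref{eq0050}, the only delicate point being that \eqref{eq0050} is an $L^2$-orthogonal decomposition whereas the constraints on $\varphi_f$ coming from \eqref{eqnnn0034} are $H^1$-orthogonality conditions.

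First I would expand $\|\varphi_f\|_{L^2(\bbr^d)}^2$ using \eqref{eq0050}. Since $\varphi_f^\perp$ is $L^2$-orthogonal to $\mathrm{span}_{j,l}\{\mathfrak{g}_{j,f},\partial_{x_l}\mathfrak{g}_{j,f}\}$, this gives $\|\varphi_f\|_{L^2(\bbr^d)}^2=\big\|\sum_{j=1}^\nu c_j\mathfrak{g}_{j,f}+\sum_{j=1}^\nu\sum_{l=1}^d b_{l,j}\partial_{x_l}\mathfrak{g}_{j,f}\big\|_{L^2(\bbr^d)}^2+\|\varphi_f^\perp\|_{L^2(\bbr^d)}^2$. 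By \eqref{eqnn0030} the centres $y_{j,f}$ are pairwise $\eta$-separated with $\eta\to+\infty$, and $\mathfrak{g}$ is even while each $\partial_{x_l}\mathfrak{g}$ is odd in $x_l$; hence the $L^2$-Gram matrix of $\{\mathfrak{g}_{j,f},\partial_{x_l}\mathfrak{g}_{j,f}\}$ is a fixed positive diagonal matrix plus an $O(e^{-c\eta^2})$ perturbation for a fixed $c>0$ (the overlaps between functions centred at distinct $y_{j,f}$ decay like $e^{-c\eta^2}$, while $\langle\mathfrak{g}_{j,f},\partial_{x_l}\mathfrak{g}_{j,f}\rangle_{L^2}$ and $\langle\partial_{x_m}\mathfrak{g}_{j,f},\partial_{x_l}\mathfrak{g}_{j,f}\rangle_{L^2}$ with $m\neq l$ vanish by parity). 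Consequently, for $\delta$ small, $\big\|\sum_j c_j\mathfrak{g}_{j,f}+\sum_{j,l} b_{l,j}\partial_{x_l}\mathfrak{g}_{j,f}\big\|_{L^2(\bbr^d)}^2\sim\sum_{j=1}^\nu|c_j|^2+\sum_{j=1}^\nu\sum_{l=1}^d|b_{l,j}|^2$, so $\|\varphi_f\|_{L^2(\bbr^d)}^2\sim\sum_j|c_j|^2+\sum_{j,l}|b_{l,j}|^2+\|\varphi_f^\perp\|_{L^2(\bbr^d)}^2$. This already yields ``$\gtrsim$'' in \eqref{eq0069}, and it remains to bound $\sum_{j,l}|b_{l,j}|^2$ by $\sum_j|c_j|^2+\|\varphi_f^\perp\|_{L^2(\bbr^d)}^2$.

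For this I would use the $H^1$-orthogonality conditions $\langle\varphi_f,\partial_{x_l}\mathfrak{g}_{i,f}\rangle_{H^1(\bbr^d)}=0$ from \eqref{eqnnn0034} together with the identity $\langle u,\partial_{x_l}\mathfrak{g}_{i,f}\rangle_{H^1(\bbr^d)}=\int_{\bbr^d}(d+3-|x-y_{i,f}|^2)u\,\partial_{x_l}\mathfrak{g}_{i,f}\,dx$ (which comes from $-\Delta\partial_{x_l}\mathfrak{g}_{i,f}+\partial_{x_l}\mathfrak{g}_{i,f}=(d+3-|x-y_{i,f}|^2)\partial_{x_l}\mathfrak{g}_{i,f}$, already exploited in the proof of Lemma~\ref{lem0004}). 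Inserting \eqref{eq0050}: by parity the diagonal contributions $\langle\mathfrak{g}_{i,f},\partial_{x_l}\mathfrak{g}_{i,f}\rangle_{H^1(\bbr^d)}$ and $\langle\partial_{x_m}\mathfrak{g}_{i,f},\partial_{x_l}\mathfrak{g}_{i,f}\rangle_{H^1(\bbr^d)}$ ($m\neq l$) vanish, the contributions from functions centred at $y_{j,f}$ with $j\neq i$ are $O(e^{-c\eta^2})$, and the surviving term is $b_{l,i}\,\langle\partial_{x_l}\mathfrak{g},\partial_{x_l}\mathfrak{g}\rangle_{H^1(\bbr^d)}$, where $\langle\partial_{x_l}\mathfrak{g},\partial_{x_l}\mathfrak{g}\rangle_{H^1(\bbr^d)}=\int_{\bbr^d}(d+3-|x|^2)(\partial_{x_l}\mathfrak{g})^2\,dx$ is a fixed nonzero constant (this nondegeneracy is exactly what is invoked in the proof of Lemma~\ref{lem0004}). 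Finally, since $\varphi_f^\perp$ is $L^2$-orthogonal to $\partial_{x_l}\mathfrak{g}_{i,f}$, we get $\langle\varphi_f^\perp,\partial_{x_l}\mathfrak{g}_{i,f}\rangle_{H^1(\bbr^d)}=-\int_{\bbr^d}\varphi_f^\perp\,|x-y_{i,f}|^2\partial_{x_l}\mathfrak{g}_{i,f}\,dx$, so $|\langle\varphi_f^\perp,\partial_{x_l}\mathfrak{g}_{i,f}\rangle_{H^1(\bbr^d)}|\lesssim\|\varphi_f^\perp\|_{L^2(\bbr^d)}$ by Cauchy--Schwarz, using $|x|^2\partial_{x_l}\mathfrak{g}\in L^2(\bbr^d)$. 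Altogether $|b_{l,i}|\lesssim e^{-c\eta^2}\big(\sum_j|c_j|+\sum_{m,j}|b_{m,j}|\big)+\|\varphi_f^\perp\|_{L^2(\bbr^d)}$ for every $l,i$.

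Summing over the finitely many $l=1,\dots,d$ and $i=1,\dots,\nu$ and using $e^{-c\eta^2}\to0$ as $\delta\to0$ (by \eqref{eqnn0030}), the term $e^{-c\eta^2}\sum_{m,j}|b_{m,j}|$ is absorbed into the left-hand side, so $\sum_{l,i}|b_{l,i}|\lesssim\sum_j|c_j|+\|\varphi_f^\perp\|_{L^2(\bbr^d)}$ and hence $\sum_{l,i}|b_{l,i}|^2\lesssim\sum_j|c_j|^2+\|\varphi_f^\perp\|_{L^2(\bbr^d)}^2$; plugging this back into the first step gives \eqref{eq0069}. The main obstacle is precisely the $H^1$-versus-$L^2$ orthogonality mismatch: $\varphi_f^\perp$ need not be $H^1$-orthogonal to the $\partial_{x_l}\mathfrak{g}_{i,f}$, so $\langle\varphi_f^\perp,\partial_{x_l}\mathfrak{g}_{i,f}\rangle_{H^1(\bbr^d)}$ does not vanish in general; what rescues the argument is that, thanks to the $L^2$-orthogonality, this quantity is \emph{linear} in $\|\varphi_f^\perp\|_{L^2(\bbr^d)}$ (no gradient of $\varphi_f^\perp$ enters), which is exactly the norm allowed on the right-hand side of \eqref{eq0069}. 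A secondary, routine point is to make the near-diagonality of the $L^2$-Gram matrix quantitative, which is where the separation $\eta\to+\infty$ is used.
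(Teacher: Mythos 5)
Your proof is correct and follows essentially the same route as the paper: expand $\|\varphi_f\|_{L^2}^2$ via the $L^2$-orthogonal splitting \eqref{eq0050}, use the near-diagonality of the Gram matrix of $\{\mathfrak{g}_{j,f},\partial_{x_l}\mathfrak{g}_{j,f}\}$ coming from the separation $\eta\to\infty$ together with the parity identity $\langle\mathfrak{g}_{j,f},\nabla\mathfrak{g}_{j,f}\rangle_{H^1}=0$ (the paper's \eqref{eq0062}), and then extract the $b_{l,j}$ from the $H^1$-orthogonality conditions $\langle\varphi_f,\partial_{x_l}\mathfrak{g}_{j,f}\rangle_{H^1}=0$.

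One point worth highlighting: you are actually \emph{more} careful than the paper at the step where the $b_{l,j}$ are isolated. When the paper plugs \eqref{eq0050} into $\langle\varphi_f,\partial_{x_l}\mathfrak{g}_{j,f}\rangle_{H^1}=0$ to obtain \eqref{eq0065}, it does not display the contribution $\langle\varphi_f^{\perp},\partial_{x_l}\mathfrak{g}_{j,f}\rangle_{H^1}$. Since $\varphi_f^{\perp}$ is only $L^2$-orthogonal (not $H^1$-orthogonal) to $\mathrm{span}_{j,l}\{\mathfrak{g}_{j,f},\partial_{x_l}\mathfrak{g}_{j,f}\}$, this pairing does not vanish in general; using $-\Delta\partial_{x_l}\mathfrak{g}_{j,f}+\partial_{x_l}\mathfrak{g}_{j,f}=(d+3-|x-y_{j,f}|^2)\partial_{x_l}\mathfrak{g}_{j,f}$ and the $L^2$-orthogonality it equals $-\int\varphi_f^{\perp}|x-y_{j,f}|^2\partial_{x_l}\mathfrak{g}_{j,f}\,dx$, which is $O(\|\varphi_f^{\perp}\|_{L^2})$, exactly as you observe. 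Consequently the correct version of \eqref{eq0066} is $\sum_{j,l}|b_{l,j}|\lesssim e^{-c\eta^2}\sum_j|c_j|+\|\varphi_f^{\perp}\|_{L^2(\bbr^d)}$, which is what you derive. This extra term is harmless for \eqref{eq0069}, since $\|\varphi_f^{\perp}\|_{L^2}^2$ already sits on its right-hand side, but your bookkeeping is the accurate one and should be preferred if \eqref{eq0066} is quoted elsewhere.
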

\begin{proof}
Note that we have
\begin{eqnarray}\label{eq0062}
\langle \mathfrak{g}_{j,f}, \nabla\mathfrak{g}_{j,f}\rangle_{H^1}=-\frac{1}{2}\nabla_{y_{j,f}}\|\mathfrak{g}_{j,f}\|_{H^1(\bbr^d)}^2=0
\end{eqnarray}
for all $j=1,2,\cdots,\nu$.  Thus, by \eqref{eq0044}, \eqref{eq0050} and \eqref{eq0062},
\begin{eqnarray}\label{eq0065}
0=\sum_{i\not=j}(c_i\langle \mathfrak{g}_{i,f}, \partial_{x_l}\mathfrak{g}_{j,f}\rangle_{H^1}+\sum_{m=1}^{d}b_{i,m}\langle \partial_{x_m}\mathfrak{g}_{i,f}, \partial_{x_l}\mathfrak{g}_{j,f}\rangle_{H^1})+b_{l,j}
\end{eqnarray}
for all $l=1,2,\cdots,d$ and $j=1,2,\cdots,\nu$.  Since
\begin{eqnarray*}
|\partial_{x_l}\mathfrak{g}_{i,f}|\lesssim r_{i,f}\mathfrak{g}_{i,f}\lesssim\mathfrak{g}_{i,f}^{1-\sigma}
\end{eqnarray*}
for all $l$, where $\sigma>0$ can be taken arbitrary small if necessary and $r_{i,f}=|x-y_{i,f}|$, by \cite[Lemma~3.7]{ACR07},
\begin{eqnarray}
|\langle \mathfrak{g}_{i,f}, \partial_{x_l}\mathfrak{g}_{j,f}\rangle_{H^1}|+|\langle \partial_{x_m}\mathfrak{g}_{i,f}, \partial_{x_l}\mathfrak{g}_{j,f}\rangle_{H^1}|
&\lesssim&\int_{\bbr^d}\mathfrak{g}_{i,f}^{1-\sigma'}\mathfrak{g}_{j,f}^{1-\sigma}\notag\\
&\lesssim&e^{-\frac{1-\sigma''}{2}\eta^2}\label{eq0064}
\end{eqnarray}
for all $i,j,l,m$,
which, together with \eqref{eq0065}, implies that
\begin{eqnarray}\label{eq0066}
\sum_{j=1}^{\nu}\sum_{l=1}^d|b_{l,j}|\lesssim e^{-\frac{1-\sigma''}{2}\eta^2}\sum_{j=1}^{\nu}|c_j|.
\end{eqnarray}
Here, $\sigma', \sigma''>0$ are constants which can be taken arbitrary small.  \eqref{eq0069} then follows from \eqref{eq0050} and \eqref{eq0066}.
\end{proof}

\vskip0.12in

We denote $N(\phi_f+\varphi_f)-N(\phi_{f})$ by $N_{\phi_{f}}(\varphi_f)$ for the sake of simplicity.  Then by Lemma~\ref{lem0005} and by multiplying \eqref{eqnnn0034} with $\varphi_f$ and integrating by parts, we have
\begin{eqnarray}\label{eq0035}
\langle\mathcal{L}_f(\varphi_f), \varphi_f\rangle_{L^2}\lesssim\int_{\bbr^d}N_{\phi_{f}}(\varphi_f)\varphi_fdx+(\|f\|_{H^{-1}}+e^{-\frac{1}{8}\eta^2})\|\varphi_f\|_{H^1(\bbr^d)}.
\end{eqnarray}
\begin{lemma}\label{lem0006}
As $\|f\|_{H^{-1}}\to0$, we have
\begin{eqnarray*}
\|\varphi_f\|_{H^1(\bbr^d)}\lesssim\|f\|_{H^{-1}}+e^{-(\frac{3}{8}-\sigma)\eta^2}.
\end{eqnarray*}
\end{lemma}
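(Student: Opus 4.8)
The plan is to turn the energy identity \eqref{eq0035} into a quadratic inequality for $t:=\|\varphi_f\|_{H^1(\bbr^d)}$ and absorb. The quadratic form $\langle\mathcal{L}_f(\cdot),\cdot\rangle_{L^2}$ fails to be coercive because the limiting operator $\mathcal{L}$ in \eqref{eq0043} has the negative eigenvalue $-2$ on $\mathrm{span}\{\mathfrak{g}\}$, so I would first use the decomposition \eqref{eq0050}, $\varphi_f=\sum_{j}\big(c_j\mathfrak{g}_{j,f}+\sum_{l}b_{l,j}\partial_{x_l}\mathfrak{g}_{j,f}\big)+\varphi_f^{\perp}$. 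Two facts are needed: first, the coercivity $\langle\mathcal{L}_f(\varphi_f^{\perp}),\varphi_f^{\perp}\rangle_{L^2}\gtrsim\|\varphi_f^{\perp}\|_{H^1(\bbr^d)}^{2}$ as $\|f\|_{H^{-1}}\to0$, proved by the blow-up scheme of Lemma~\ref{lem0004} (the limiting function lies in $\mathrm{span}\{\partial_{x_j}\mathfrak{g}\}$ by nondegeneracy of $\mathfrak{g}$, hence vanishes by the $L^2$-orthogonality of $\varphi_f^{\perp}$, and the next eigenvalue of $\mathcal{L}$ after $0$ is strictly positive); second, the identities $\mathcal{L}_f(\mathfrak{g}_{j,f})=-2\mathfrak{g}_{j,f}-2\mathfrak{g}_{j,f}\log\!\big(\sum_i\mathfrak{g}_{i,f}/\mathfrak{g}_{j,f}\big)$ and $\mathcal{L}_f(\partial_{x_l}\mathfrak{g}_{j,f})=-2\log\!\big(\sum_i\mathfrak{g}_{i,f}/\mathfrak{g}_{j,f}\big)\partial_{x_l}\mathfrak{g}_{j,f}$, immediate from \eqref{eq0009} and \eqref{eq0011}. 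Inserting \eqref{eq0050} into the left side of \eqref{eq0035}, using these identities, the $L^2$-orthogonality, and \eqref{eq0062}, \eqref{eq0066}, all cross terms are either absorbable into the coercive term or controlled by $\sum_j|c_j|^2$, so that \eqref{eq0035} gives $\|\varphi_f^{\perp}\|_{H^1(\bbr^d)}^{2}\lesssim\sum_{j}|c_j|^2+|\int_{\bbr^d}N_{\phi_f}(\varphi_f)\varphi_f\,dx|+(\|f\|_{H^{-1}}+e^{-\frac18\eta^2})\,t$.

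The heart of the proof is the estimate of the unstable coefficients $c_j$. For this I would test \eqref{eqnnn0034} against $\mathfrak{g}_{j,f}$ and move $\mathcal{L}_f$ onto $\mathfrak{g}_{j,f}$; with the identity for $\mathcal{L}_f(\mathfrak{g}_{j,f})$, the relation $\langle\varphi_f^{\perp},\mathfrak{g}_{j,f}\rangle_{L^2}=0$ and \eqref{eq0062}, \eqref{eq0066} this produces
\[
-2\|\mathfrak{g}\|_{L^2(\bbr^d)}^{2}\,c_j\,(1+o(1))=\langle N_{\phi_f}(\varphi_f),\mathfrak{g}_{j,f}\rangle_{L^2}+\sum_{j',i}a_{j',i}\langle\partial_{x_{j'}}\mathfrak{g}_{i,f},\mathfrak{g}_{j,f}\rangle_{L^2}+\langle f,\mathfrak{g}_{j,f}\rangle-2\langle\varphi_f,\mathfrak{g}_{j,f}\log(\textstyle\sum_i\mathfrak{g}_{i,f}/\mathfrak{g}_{j,f})\rangle_{L^2}.
\]
The last term is the genuinely new difficulty (it has no analogue in the power case of \cite{DSW2021,WW22}): a bare Cauchy--Schwarz estimate only yields $e^{-\frac18\eta^2}\|\varphi_f\|_{L^2}$, which is insufficient. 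To improve it I would use $2\sum_j\mathfrak{g}_{j,f}\log(\sum_i\mathfrak{g}_{i,f}/\mathfrak{g}_{j,f})=E$ and rewrite $E$ via the equation \eqref{eqnn0034} for $\phi_f$, namely $E=\mathcal{L}_f(\phi_f)-N(\phi_f)+\sum a_{j,i}\partial_{x_j}\mathfrak{g}_{i,f}$; moving $\mathcal{L}_f$ back onto $\varphi_f$ and using $\phi_f\in\mathbb{X}^{\perp}$ together with \eqref{eqnnn0034}, the leading contribution is $\sum a_{j,i}\langle\partial_{x_j}\mathfrak{g}_{i,f},\phi_f\rangle_{L^2}$, which is $\lesssim e^{-(\frac14-\sigma)\eta^2}\|\phi_f\|_{H^1(\bbr^d)}\lesssim e^{-(\frac38-\sigma)\eta^2}$ by the sharpened bound $\sum_{j,i}|a_{j,i}|\lesssim e^{-(\frac14-\sigma)\eta^2}$ (read off from $a_{j,i}\sim\langle E+N(\phi_f),\partial_{x_j}\mathfrak{g}_{i,f}\rangle_{L^2}$, Lemma~\ref{lem0001} and Lemma~\ref{lem0005}) and $\|\phi_f\|_{H^1(\bbr^d)}\lesssim e^{-\frac18\eta^2}$; this product is exactly where the exponent $\tfrac38-\sigma$ in the statement is born. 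Keeping track of the remaining terms — $\langle f,\mathfrak{g}_{j,f}\rangle\lesssim\|f\|_{H^{-1}}$, $\sum a\langle\partial\mathfrak{g},\mathfrak{g}_{j,f}\rangle_{L^2}\lesssim e^{-(\frac12-\sigma)\eta^2}$, and the nonlinear terms below — yields $\sum_j|c_j|\lesssim\|f\|_{H^{-1}}+e^{-(\frac38-\sigma)\eta^2}+e^{-\frac18\eta^2}t+t^2$.

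For the nonlinear part I would apply the Taylor expansion \eqref{eq0080} and the pointwise bound \eqref{eqnnn0080} for $\phi_f/\sum_i\mathfrak{g}_{i,f}$ to get $|\int_{\bbr^d}N_{\phi_f}(\varphi_f)\varphi_f\,dx|+\sum_j|\langle N_{\phi_f}(\varphi_f),\mathfrak{g}_{j,f}\rangle_{L^2}|\lesssim(e^{-\frac18\eta^2}+t)(t^2+t)$. Substituting the bound for $\sum_j|c_j|^2$ into the coercivity inequality, using Lemma~\ref{lem0002} in the form $t^{2}\lesssim\sum_j|c_j|^2+\|\varphi_f^{\perp}\|_{H^1(\bbr^d)}^{2}$, and absorbing every term carrying a factor $e^{-\frac18\eta^2}$ or an extra power of $t$ (all of which are small as $\|f\|_{H^{-1}}\to0$ by \eqref{eq0003}, Lemma~\ref{lem0005} and \eqref{eqnn0030}), one is left with $t^{2}\lesssim\big(\|f\|_{H^{-1}}+e^{-(\frac38-\sigma)\eta^2}\big)t+\big(\|f\|_{H^{-1}}+e^{-(\frac38-\sigma)\eta^2}\big)^{2}$, which gives the assertion.

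I expect the main obstacle to be exactly the control of the projection $c_j$ onto the unstable mode $\mathfrak{g}_{j,f}$: since $\mathcal{L}_f$ is not coercive there, $c_j$ must be extracted from the equation, and the only way to avoid the lossy factor $e^{-\frac18\eta^2}$ from the logarithmic interaction $\mathfrak{g}_{j,f}\log(\sum_i\mathfrak{g}_{i,f}/\mathfrak{g}_{j,f})$ is to route it through \eqref{eqnn0034} and use $\phi_f\in\mathbb{X}^{\perp}$ together with the sharp size of the Lagrange multipliers. Because the power nonlinearity of \cite{DSW2021,WW22} is here replaced by $2u\log|u|$, every interaction quantity — $E$, $\mathfrak{g}_{i,f}\log(\sum_k\mathfrak{g}_{k,f}/\mathfrak{g}_{i,f})$, $\log(\sum_k\mathfrak{g}_{k,f}/\mathfrak{g}_{i,f})\partial_{x_l}\mathfrak{g}_{i,f}$ and the remainder $N_{\phi_f}(\varphi_f)$ — must be re-estimated in the weighted norms $\|\cdot\|_{\natural},\|\cdot\|_{\sharp}$ attached to the Gaussian profile, and this is the real technical content of the argument.
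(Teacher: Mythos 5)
Your overall architecture matches the paper's: split $\varphi_f$ as in \eqref{eq0050}, use coercivity of the linearized quadratic form on the part orthogonal to the kernel and to $\mathfrak{g}_{j,f}$, and control the unstable coefficients $c_j$ by testing the equation \eqref{eqnnn0034}. But your treatment of the $c_j$-estimate has a genuine gap, and it arises from a difficulty you have manufactured. When you test against the unlocalized $\mathfrak{g}_{j,f}$ you obtain the extra term $-2\langle\varphi_f,\mathfrak{g}_{j,f}\log\big(\sum_i\mathfrak{g}_{i,f}/\mathfrak{g}_{j,f}\big)\rangle_{L^2}$, and you assert that the Cauchy--Schwarz bound $\lesssim e^{-(\frac18-\sigma)\eta^2}\|\varphi_f\|_{L^2}$ is ``insufficient.'' It is not: after squaring this contributes $e^{-(\frac14-\sigma)\eta^2}\|\varphi_f\|^2$ to $\sum_j|c_j|^2$, and feeding this together with the coercivity estimate $\|\varphi_f^\perp\|^2\lesssim\|f\|^2+e^{-(1-\sigma'')\eta^2}+\sum_j|c_j|^2$ into Lemma~\ref{lem0002} gives a linear-in-$\|\varphi_f\|^2$ term with a vanishing prefactor that is absorbed by the left side, exactly as in the paper's Step~4. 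So the dominant source of the exponent $\frac38-\sigma$ is elsewhere, namely the cross terms $\sum a_{j',i}\langle\partial_{x_{j'}}\mathfrak{g}_{i,f},\mathfrak{g}_{j,f}\rangle_{L^2}$ and the analogous quantities, not the log-interaction.

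The ``improvement'' you propose to replace the Cauchy--Schwarz bound does not go through as written: you want to estimate the fixed-$j$ inner product $\langle\varphi_f,\mathfrak{g}_{j,f}\log\big(\sum_i\mathfrak{g}_{i,f}/\mathfrak{g}_{j,f}\big)\rangle$, but the identity $E=2\sum_j\mathfrak{g}_{j,f}\log\big(\sum_i\mathfrak{g}_{i,f}/\mathfrak{g}_{j,f}\big)$ and the substitution $E=\mathcal{L}_f(\phi_f)-N(\phi_f)+\sum a_{j',i}\partial_{x_{j'}}\mathfrak{g}_{i,f}$ from \eqref{eqnn0034} only give you $\langle\varphi_f,E\rangle_{L^2}$, the sum over $j$, and there is no way to recover the individual summand from that. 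Dropping this detour and using plain Cauchy--Schwarz (which is enough, as explained above) closes the gap. One further caveat: your sharper Lagrange-multiplier bound $\sum_{j',i}|a_{j',i}|\lesssim e^{-(\frac14-\sigma)\eta^2}$ is plausible from the interface concentration of $E$ and $\partial_{x_{j'}}\mathfrak{g}_{i,f}$, but the paper only establishes $\lesssim e^{-\frac18\eta^2}$ (Lemma~\ref{lem0005}); since the $\frac38$ already comes out with the weaker bound (combined with the overlap $\langle\partial_{x_{j'}}\mathfrak{g}_{i,f},\mathfrak{g}_{j,f}\rangle_{L^2}\lesssim e^{-(\frac14-\sigma)\eta^2}$ for $i\neq j$), you need not invoke the sharper bound at all. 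By contrast, the paper avoids the log-interaction term entirely by testing against the cutoff $\widetilde{\psi}_j=\mathfrak{g}_{j,f}\widehat{\psi}_j$: inside the support of $\widehat{\psi}_j$ one has $\sum_i\mathfrak{g}_{i,f}\sim\mathfrak{g}_{j,f}$ so the logarithmic factor is $O(e^{-\sigma\eta^2})$, and the price paid is the cutoff-boundary terms $\int(2\varphi_f\nabla\widehat{\psi}_j\nabla\mathfrak{g}_{j,f}+\Delta\widehat{\psi}_j\,\mathfrak{g}_{j,f}\varphi_f)\,dx$, which live where $\mathfrak{g}_{j,f}\sim e^{-(\frac18-\sigma)\eta^2}$ and hence are $o(\sum_i|c_i|+\|\varphi_f^\perp\|_{H^1})$ by \eqref{eq0050} and Lemma~\ref{lem0002}. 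Your unlocalized test function and the paper's localized one are cousins, each trading one small error term for another.
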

\begin{proof}
For the sake of clarity, we divide the proof into several parts.

{\bf Step.~1}\quad The estimate of $N_{\phi_{f}}(\varphi_f)\varphi_f$.

By the Taylor expansion and $u=\sum_{j=1}^{\nu}\mathfrak{g}_{j,f}+\phi_f+\varphi_f\geq0$,
\begin{eqnarray}
N_{\phi_{f}}(\varphi_f)&=&2(\sum_{j=1}^{\nu}\mathfrak{g}_{j,f}+\phi_f+\varphi_f)\log(\sum_{j=1}^{\nu}\mathfrak{g}_{j,f}+\phi_f+\varphi_f)\notag\\
&&-2(\sum_{j=1}^{\nu}\mathfrak{g}_{j,f}+\phi_{f})\log(\sum_{j=1}^{\nu}\mathfrak{g}_{j,f}+\phi_{f})\notag\\
&&-2(1+\log(\sum_{j=1}^{\nu}\mathfrak{g}_{j,f}))\varphi_f\label{eqnnn0036}\\
&=&2\log(1+\frac{\phi_{f}+\theta\varphi_f}{\sum_{j=1}^{\nu}\mathfrak{g}_{j,f}}))\varphi_f\label{eqnn0036}\\
&=&2\log(1+\frac{\phi_{f}}{\sum_{j=1}^{\nu}\mathfrak{g}_{j,f}}))\varphi_f+\frac{\varphi_f^2}{\sum_{i=1}^\nu\mathfrak{g}_{i,f}+\phi_{f}+\theta'\varphi_f},\label{eq0036}
\end{eqnarray}
where $\theta,\theta'\in(0, 1)$.  By \eqref{eqnnn0080}, we have
\begin{eqnarray}\label{eqnnnn0080}
|\frac{\phi_f}{\sum_{j=1}^{\nu}\mathfrak{g}_{j,f}}|\lesssim\eta^{-2}.
\end{eqnarray}
Thus,
\begin{eqnarray*}
\sum_{i=1}^\nu\mathfrak{g}_{i,f}+\phi_{f}=(1+O(\eta^{-2}))\sum_{i=1}^\nu\mathfrak{g}_{i,f}.
\end{eqnarray*}
For every $R>0$, let
\begin{eqnarray*}
\Upsilon_{R,\alpha}=\{x\in \partial(\cup_{j=1}^{\nu}B_{R}(y_{j,f}))\mid \varphi_f=\alpha\sum_{i=1}^\nu\mathfrak{g}_{i,f}\}
\end{eqnarray*}
where $\alpha\geq-1+O(\eta^{-2})$.  Since $0\leq\alpha\leq e^{\frac{R^2-(d+5)}{2}}-1+O(\eta^{-2})$ implies that
\begin{eqnarray*}
2(1+\log(1+O(\eta^{-2})+\theta\alpha))\leq R^2-(d+3)
\end{eqnarray*}
and $-1+O(\eta^{-2})+e^{-\frac{R^2-(d+1)}{2}}\leq\alpha<0$ implies that
\begin{eqnarray*}
-2(1+\log(1+O(\eta^{-2})+\theta\alpha))\leq R^2-(d+3)
\end{eqnarray*}
by $\theta\in(0, 1)$, by \eqref{eq0011}, \eqref{eqnn0036} and \eqref{eqnnnn0080}, for every $R>0$ and every $x\in\partial(\cup_{j=1}^{\nu}B_{R}(y_{j,f}))$, one of the following cases must happen:
\begin{enumerate}
\item[$(a)$]\quad $N_{\phi_{f}}(\varphi_f)\varphi_f\leq (R^2-(d+3))\varphi_f^2$,
\item[$(b)$]\quad $\varphi_f\gtrsim1$,
\item[$(c)$]\quad $\varphi_f\sim-\sum_{i=1}^\nu\mathfrak{g}_{i,f}$.
\end{enumerate}
On the other hand, by \eqref{eqnnn0036}, \eqref{eq0036} and \eqref{eqnnnn0080}, in $\Upsilon_{R,\alpha}$ with $-1\leq\alpha<0$,
\begin{eqnarray*}
2((1+O(\eta^{-2})+\alpha)\log(1+O(\eta^{-2})+\alpha)-\alpha)=O(\eta^{-2})\alpha+\frac{\alpha^2}{1+O(\eta^{-2})+\theta' \alpha}.
\end{eqnarray*}
It follows that $\theta'\to\frac12+O(\eta^{-\sigma})$ as $\alpha\to-1$.
Thus, by \eqref{eq0011},
\begin{eqnarray}
N_{\phi_{f}}(\varphi_f)\varphi_f\leq(\max\{-(2+2\log(\sum_{j=1}^{\nu}\mathfrak{g}_{j,f})), 0\}+O(\eta^{-\sigma}))|\varphi_f|^2+O(\varphi_f^3).\label{eqnn0005}
\end{eqnarray}

{\bf Step.~2}\quad The estimate of $\|\varphi_f^{\perp}\|_{H^1(\bbr^d)}^2$.

By \eqref{eq0050}, \eqref{eq0066}, \eqref{eq0035}, \eqref{eqnn0005} and Lemma~\ref{lem0002},
\begin{eqnarray*}
\langle\mathcal{L}_f(\varphi_f), \varphi_f\rangle_{L^2}&\lesssim&\int_{\bbr^d}N_{\phi_{f}}(\varphi_f)\varphi_{f}dx+(\|f\|_{H^{-1}}+e^{-\frac{1-\sigma''}{2}\eta^2}\sum_{j=1}^{\nu}|c_j|)\|\varphi_f\|_{H^1(\bbr^d)}\notag\\
&\leq&\int_{\bbr^d}\max\{-(2+2\log(\sum_{j=1}^{\nu}\mathfrak{g}_{j,f})), 0\}\varphi_f^2dx\notag\\
&&+O(\|\varphi_f\|^3_{H^1(\bbr^d)})+(\|f\|_{H^{-1}}+e^{-\frac{1-\sigma''}{2}\eta^2}\sum_{j=1}^{\nu}|c_j|)\|\varphi_f\|_{H^1(\bbr^d)}.
\end{eqnarray*}
It follows that
\begin{eqnarray}
&&\int_{\cup_{j=1}^{\nu}B_{R}(y_{j,f})}(|\nabla \varphi_f|^2-(1+2\log(\sum_{j=1}^{\nu}\mathfrak{g}_{j,f})))|\varphi_f|^2dx\notag\\
&&+\int_{\bbr^d\backslash(\cup_{j=1}^{\nu}B_{R}(y_{j,f}))}|\nabla \varphi_f|^2+|\varphi_f|^2dx\notag\\
&\leq&O(\|\varphi_f\|^3_{H^1(\bbr^d)})+(\|f\|_{H^{-1}}+e^{-\frac{1-\sigma''}{2}\eta^2}\sum_{j=1}^{\nu}|c_j|)\|\varphi_f\|_{H^1(\bbr^d)}.\label{eqnn0035}
\end{eqnarray}
for a sufficiently large $R>0$.  For every $j$,
\begin{eqnarray}
&&\int_{B_{R}(y_{j,f})}(|\nabla \varphi_f|^2-(1+2\log(\sum_{j=1}^{\nu}\mathfrak{g}_{j,f})))|\varphi_f|^2dx\notag\\
&\geq&\int_{B_{R}(y_{j,f})}[(1-\sigma)(d+1)-r_{i,f}^2-2\log(\sum_{j=1}^\nu\mathfrak{g}_{j,f}))]|\varphi_f|^2dx)\notag\\
&&+\int_{B_{R}(y_{j,f})}|\nabla \varphi_f|^2+(r_{i,f}^2-d-2)|\varphi_f|^2dx\label{eq0046}
\end{eqnarray}
where $\sigma>0$ is sufficient small.  By \cite[Theorem~7.5 and Remark~7.7]{GS11}, it is easy to show that
\begin{eqnarray}\label{eqnnn0007}
\int_{B_{R}(y_{j,f})}|\nabla \widetilde{\varphi}_{f,R}^{\perp}|^2+(r_{j,f}^2-d-2)|\widetilde{\varphi}_{f,R}^{\perp}|^2dx\gtrsim\int_{B_{R}(y_{j,f})}|\nabla \widetilde{\varphi}_{f,R}^{\perp}|^2+|\widetilde{\varphi}_{f,R}^{\perp}|^2dx
\end{eqnarray}
where $R>0$ sufficiently large and $\widetilde{\varphi}_{f,R}^{\perp}=\varphi_f^{\perp}\psi_R$ with $\psi_R$ being a smooth cut-off function such that $\psi_R=1$ in $B_{\frac{R}{4}}(0)$ and $\psi_R=0$ in $B_{R}(0)\backslash B_{\frac{R}{2}}(0)$.  Note that
\begin{eqnarray*}
&&\int_{B_{R}(y_{j,f})}|\nabla \widetilde{\varphi}_{f,R}^{\perp}|^2+(r_{j,f}^2-d-2)|\widetilde{\varphi}_{f,R}^{\perp}|^2dx\\
&\lesssim&\int_{B_{R}(y_{j,f})}|\nabla \varphi_f^{\perp}|^2+(r_{i,f}^2-d-2)|\varphi_f^{\perp}|^2dx\\
&&+\int_{B_{R}(y_{j,f})\backslash B_{\frac{R}{2}}(y_{j,f})}|\nabla \widetilde{\varphi}_{f,R}^{\perp}|^2+|\widetilde{\varphi}_{f,R}^{\perp}|^2dx\\
&\lesssim&\int_{B_{R}(y_{j,f})}|\nabla \varphi_f^{\perp}|^2+(r_{i,f}^2-d-2)|\varphi_f^{\perp}|^2dx
\end{eqnarray*}
and
\begin{eqnarray*}
\int_{B_{R}(y_{j,f})}|\nabla \widetilde{\varphi}_{f,R}^{\perp}|^2+|\widetilde{\varphi}_{f,R}^{\perp}|^2dx&=&\int_{B_{R}(y_{j,f})}|\nabla \varphi_f^{\perp}|^2+|\varphi_f^{\perp}|^2dx\\
&&+O(1)\int_{B_{R}(y_{j,f})}|\nabla \varphi_f^{\perp}|^2+(r_{i,f}^2-d-2)|\varphi_f^{\perp}|^2dx,
\end{eqnarray*}
thus, by \eqref{eqnnn0007},
\begin{eqnarray}\label{eqnn0006}
\int_{B_{R}(y_{j,f})}|\nabla \varphi_f^{\perp}|^2+(r_{i,f}^2-d-2)|\varphi_f^{\perp}|^2dx\gtrsim\int_{B_{R}(y_{j,f})}|\nabla \varphi_f^{\perp}|^2+|\varphi_f^{\perp}|^2dx.
\end{eqnarray}
On the other hand, we have $|x-y_{i,f}|\geq\eta+O(1)$ for all $j\not=i$ in $B_{R}(y_{j,f})$ which implies that
\begin{eqnarray*}
\sum_{i\not=j}\mathfrak{g}_{i,f}\lesssim e^{-\frac{\eta^2}{2}}\mathfrak{g}_{j,f}\quad\text{in }B_{R}(y_{j,f}).
\end{eqnarray*}
It follows that
\begin{eqnarray*}
-2\log(\sum_{j=1}^{\nu}\mathfrak{g}_{j,f})=-(d+1)+r_{j,f}^2+O(e^{-\frac{\eta^2}{2}})\quad\text{in }B_{R}(y_{j,f}),
\end{eqnarray*}
which implies that
\begin{eqnarray*}
|\int_{B_{R}(y_{j,f})}[(1-\sigma)(d+1)-r_{i,f}^2-2\log(\sum_{j=1}^\nu\mathfrak{g}_{j,f}))]|\varphi_f|^2dx|\lesssim\sigma\|\varphi_f\|_{H^1(\bbr^d)}^2.
\end{eqnarray*}
It follows from \eqref{eqnn0035}, \eqref{eq0046}, \eqref{eqnn0006} and Lemma~\ref{lem0002} that
\begin{eqnarray}
\|\varphi_f^{\perp}\|_{H^1(\bbr^d)}^2\lesssim\|f\|_{H^{-1}}^2+e^{-(1-\sigma'')\eta^2}+\sum_{j=1}^{\nu}|c_j|^2.\label{eqnnn0035}
\end{eqnarray}

{\bf Step.~3}\quad The estimate of $\sum_{j=1}^{\nu}|c_j|^2$.

Let
\begin{eqnarray*}
\widetilde{\psi}_{j}=\mathfrak{g}_{j,f}\widehat{\psi}_j,
\end{eqnarray*}
where $\widehat{\psi}_j$ is a smooth cut-off function satisfying
\begin{eqnarray*}
\widehat{\psi}_j=\left\{\aligned&1,\quad|x-y_{j,f}|\leq(\frac12-\sigma)\eta,\\
&0,\quad |x-y_{j,f}|\geq(\frac12-\sigma)\eta+1.
\endaligned
\right.
\end{eqnarray*}
Note that $\mathfrak{g}_{j,f}$ is the unique positive solution of \eqref{eq0009}, thus, by multiplying \eqref{eqnnn0034} with $-sgn(c_j)\widetilde{\psi}_{j}$ and integrating by parts, we have
\begin{eqnarray}
&&-sgn(c_j)\int_{\bbr^d}(N_{\phi_{f}}(\varphi_f)+\sum_{j=1}^d\sum_{i=1}^{\nu}a_{j,i}\partial_{x_j}\mathfrak{g}_{i,f})\widetilde{\psi}_j+f\widetilde{\psi}_jdx\notag\\
&=&-sgn(c_j)\int_{\bbr^d}-\Delta\mathfrak{g}_{j,f}\widehat{\psi}_j\varphi_{f}-2\varphi_f\nabla\widehat{\psi}_j\nabla\mathfrak{g}_{j,f}
-\Delta\widehat{\psi}_j\mathfrak{g}_{j,f}\varphi_fdx\notag\\
&&+sgn(c_j)\int_{\bbr^d}(1+2\log(\sum_{j=1}^\nu\mathfrak{g}_{j,f}))\widehat{\psi}_j\mathfrak{g}_{j,f}\phi_fdx\notag\\
&=&sgn(c_j)\int_{\bbr^d}(r_{j,f}^2+1-d+2\log(\sum_{j=1}^\nu\mathfrak{g}_{j,f}))\widehat{\psi}_j\mathfrak{g}_{j,f}\phi_fdx\notag\\
&&+sgn(c_j)\int_{\bbr^d}2\phi_f\nabla\widehat{\psi}_j\nabla\mathfrak{g}_{j,f}
+\Delta\widehat{\psi}_j\mathfrak{g}_{j,f}\phi_fdx.\label{eq0051}
\end{eqnarray}
By \eqref{eq0064} and similar estimates of \eqref{eqnn0005},
\begin{eqnarray*}
\int_{\bbr^d}N_{\phi_{f}}(\varphi_f)\widetilde{\psi}_jdx&\leq& |c_j|\int_{\bbr^d}\max\{-(2+2\log(\sum_{j=1}^{\nu}\mathfrak{g}_{j,f})), 0\}|\mathfrak{g}_{j,f}|^2\widehat{\psi}_jdx\\
&&+o(\sum_{i=1}^{\nu}|c_i|+\|\phi_f^{\perp}\|_{H^1(\bbr^d)})+\sum_{i=1}^{\nu}|c_i|^2+\|\phi_f^{\perp}\|_{H^1(\bbr^d)}^2.
\end{eqnarray*}
Note that supp$(\widehat{\psi}_j)\subset\Omega_j$ with $\sum_{i\not=j}\mathfrak{g}_{i,f}\lesssim e^{-\sigma\eta^2}\mathfrak{g}_{j,f}$ in supp$(\widehat{\psi}_j)$, thus, by \eqref{eq0011}, \eqref{eq0050} and Lemma~\ref{lem0002},
\begin{eqnarray*}
\int_{\bbr^d}2\phi_f\nabla\widehat{\psi}_j\nabla\mathfrak{g}_{j,f}
+\Delta\widehat{\psi}_j\mathfrak{g}_{j,f}\phi_fdx=o(\sum_{i=1}^{\nu}|c_i|+\|\phi_f^{\perp}\|_{H^1(\bbr^d)})
\end{eqnarray*}
and
\begin{eqnarray*}
&&sgn(c_j)\int_{\bbr^d}(r_{j,f}^2+1-d+2\log(\sum_{j=1}^\nu\mathfrak{g}_{j,f}))\widehat{\psi}_j\mathfrak{g}_{j,f}\phi_fdx-|\int_{\bbr^d}N_{\phi_{f}}(\varphi_f)\widetilde{\psi}_jdx|\\
&\sim& |c_j|+o(\sum_{i=1}^{\nu}|c_i|+\|\phi_f^{\perp}\|_{H^1(\bbr^d)}),
\end{eqnarray*}
which, together with \eqref{eq0051} and Lemma~\ref{lem0005}, implies that
\begin{eqnarray}\label{eq0052}
|c_j|+o(\sum_{i\not=j}c_i+\|\phi_f^{\perp}\|_{H^1(\bbr^d)})\lesssim O(e^{-(\frac{3}{8}-\sigma)\eta^2})+\|f\|_{H^{-1}}.
\end{eqnarray}
Since \eqref{eq0052} holds for all $j=1,2,\cdots,\nu$ and the linear part of this inequality is diagonally dominant, we have
\begin{eqnarray}\label{eq0056}
\sum_{j=1}^{\nu}|c_j|^2\lesssim O(e^{-(\frac{3}{4}-2\sigma)\eta^2})+\|f\|_{H^{-1}}^2.
\end{eqnarray}

{\bf Step.~4}\quad The estimate of $\|\varphi_f\|_{H^1(\bbr^d)}$.

\eqref{eq0069} comes from \eqref{eqnnn0035}, \eqref{eq0056} and Lemma~\ref{lem0002}.
\end{proof}

\vskip0.12in

We also need the following lemma.
\begin{lemma}\label{lem0007}
As $\|f\|_{H^{-1}}\to0$, we have $e^{-\frac{1}{8}\eta^2}\lesssim\|f\|_{H^{-1}}$.
\end{lemma}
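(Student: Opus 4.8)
The statement to prove is the matching lower bound on the interaction: whenever $u$ almost solves \eqref{eq0009} and lies near a sum of $\nu$ well-separated Gaussians, the intrinsic geometric error $e^{-\frac18\eta^2}$ cannot, up to a constant, be smaller than $\|f\|_{H^{-1}}=\delta$. This is where the full strength of the decomposition $\rho_f=\phi_f+\varphi_f$ and of the sharp bounds of Lemmas \ref{lem0005} and \ref{lem0006} is used, and it is the analogue, in the logarithmic setting, of the matching interaction estimates of \cite{DSW2021,WW22}.

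The plan is to test the equation \eqref{eq0034} for $\rho_f$ against a function $\Phi$ adapted to the ``ridge'' of the error $E$. Fix $i\neq j$ with $|y_{i,f}-y_{j,f}|=\eta$ and take $\Phi$ to be the transverse ground state $\mathfrak{g}_{i,f,d-1,j}$ multiplied by a smooth longitudinal cut-off supported in $D_{\sigma\eta,i,j}$ and concentrated near the hyperplane $\Pi_{c,i,j}$ on which $\alpha_{c,i,j}\approx-\tfrac12$; by Lemma \ref{lem0001} and \eqref{eqnnnn0001} this is exactly where $E$ attains its maximum $\sim e^{-\frac18\eta^2}\mathfrak{g}_{i,f,d-1,j}$, so that $\langle E,\Phi\rangle\gtrsim e^{-\frac18\eta^2}\|\Phi\|_{L^2(\bbr^d)}^2$. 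Testing \eqref{eq0034} against $\Phi$ and moving $\mathcal{L}_f$ onto $\Phi$ gives $\langle\rho_f,\mathcal{L}_f\Phi\rangle=\langle E,\Phi\rangle+\langle N(\rho_f),\Phi\rangle+\langle f,\Phi\rangle$. One then estimates: $\langle N(\rho_f),\Phi\rangle$ is of strictly higher order by \eqref{eq0080}, \eqref{eqnnn0080}, $\|\phi_f\|_{\sharp}\lesssim e^{-\frac18\eta^2}$ (Lemma \ref{lem0005}) and $\|\varphi_f\|_{H^1(\bbr^d)}\lesssim\delta+e^{-(\frac38-\sigma)\eta^2}$ (Lemma \ref{lem0006}); $\langle\rho_f,\mathcal{L}_f\Phi\rangle$ is handled by splitting $\rho_f=\phi_f+\varphi_f$, bounding $\mathcal{L}_f\Phi$ on the ridge via \eqref{eq0025} and \eqref{eqnn0909}--\eqref{eqnn0910}, and replacing $\langle\phi_f,\mathcal{L}_f\Phi\rangle$ by $\langle E,\Phi\rangle$ up to a higher-order remainder through the $\phi_f$-equation \eqref{eqnn0034} (after discarding the negligible projection of $\Phi$ onto the $\partial_{x_l}\mathfrak{g}_{k,f}$); and $|\langle f,\Phi\rangle|\lesssim\delta\|\Phi\|_{H^1(\bbr^d)}$. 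Since $\Phi$ can be chosen with $\|\Phi\|_{H^1(\bbr^d)}\lesssim\|\Phi\|_{L^2(\bbr^d)}$ bounded below, absorbing the higher-order terms for $\delta$ small yields $e^{-\frac18\eta^2}\lesssim\delta$.

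The main obstacle I anticipate is the near-cancellation just mentioned: $\phi_f$ was constructed precisely to absorb $E$, so a crude choice of $\Phi$ makes $\langle E,\Phi\rangle$ cancel against the matching part of $\langle\phi_f,\mathcal{L}_f\Phi\rangle$ and produces only an identity, not an estimate. This is the essential difference from the power-type nonlinearities of \cite{DSW2021,WW22}: here $E\sim e^{-\frac18\eta^2}$ concentrates on a thin hyperplane ``ridge'' midway between adjacent Gaussians, where $\mathcal{L}_f$ has size $\sim\eta^2$, instead of being detected by the classical interaction integrals $\langle E,\partial_{x_l}\mathfrak{g}_{k,f}\rangle\sim e^{-\frac14\eta^2}$ against bubble derivatives; this forces both the ridge-adapted test function above and the two-scale split of $\rho_f$. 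To break the tie one exploits, first, that $\varphi_f$ is a full order smaller than $e^{-\frac18\eta^2}$ whenever $\delta$ is, so that every $\varphi_f$-term, in particular $\langle\varphi_f,\mathcal{L}_f\Phi\rangle$, is genuinely negligible; and second, the nonnegativity of $E$ coming from \eqref{eq0013}, which makes the surviving residual one-signed and hence yields a one-sided bound rather than a cancellation. Keeping careful track of the polynomial-in-$\eta$ losses (which do not affect the exponential rate) is the remaining bookkeeping. An alternative route I would pursue in parallel is a contradiction argument: if $\delta_n=o(e^{-\frac18\eta_n^2})$ then by Lemmas \ref{lem0005} and \ref{lem0006} one would have $\rho_{f_n}=\phi_{f_n}+o(e^{-\frac18\eta_n^2})$ with $\phi_{f_n}$ determined by $E$ alone, and localizing near the closest pair and renormalizing by $e^{\frac18\eta_n^2}$ should produce a nontrivial limiting profile incompatible with the orthogonality conditions \eqref{eq0044}, as in the blow-up of Lemma \ref{lem0004}.
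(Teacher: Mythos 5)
Your high-level plan---test the equation for $\rho_f$ against a function concentrated on the ridge between the two nearest Gaussians, identify $\langle E,\cdot\rangle$ as the main contribution, and control the remaining terms via Lemmas \ref{lem0005} and \ref{lem0006}---is the right strategy and matches the paper, and you correctly identify the crux: the $\langle\rho_f,\mathcal{L}_f\cdot\rangle$ term threatens to be of the same size as $\langle E,\cdot\rangle$. However, the resolution you sketch does not close. With your test function $\Phi=\mathfrak{g}_{i,f,d-1,j}$ times a longitudinal cutoff, $\mathcal{L}_f\Phi\sim\eta^2\Phi$ on the ridge by \eqref{eq0025} and \eqref{eqnn0909}, while $|\phi_f|\lesssim\eta^{-2}e^{-\frac18\eta^2}\mathfrak{g}_{i,f,d-1,j}$ there from the $\sharp$-bound of Lemma~\ref{lem0005}; the $\eta^2$ and $\eta^{-2}$ cancel, so $|\langle\phi_f,\mathcal{L}_f\Phi\rangle|\sim e^{-\frac18\eta^2}\|\Phi\|_{L^2}^2$, the \emph{same} size as $\langle E,\Phi\rangle$, and no lower bound on $\|f\|_{H^{-1}}$ can be extracted. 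Your alternative --- replacing $\langle\phi_f,\mathcal{L}_f\Phi\rangle$ by $\langle E,\Phi\rangle$ via \eqref{eqnn0034} --- makes it worse: the $\langle E,\Phi\rangle$ terms then cancel \emph{exactly}, and the leftover pieces ($\langle\varphi_f,\mathcal{L}_f\Phi\rangle$, the nonlinear differences, the $a_{j,i}$-projections) are all $\lesssim\|f\|_{H^{-1}}$ or smaller exponentials; the nonnegativity of $E$ is no longer available to break the tie once $\langle E,\Phi\rangle$ has been matched off, so the resulting inequality says nothing about $e^{-\frac18\eta^2}$ versus $\delta$.

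The missing ingredient is the paper's choice of test function $\widehat{\psi}=\mathfrak{g}_{1,f}\overline{\psi}$ --- the full $d$-dimensional Gaussian $\mathfrak{g}_{1,f}$ localized to a unit ball near the midpoint (slightly shifted) --- rather than the transverse $(d-1)$-dimensional Gaussian. Because $\mathfrak{g}_{1,f}$ solves \eqref{eq0009}, one has $\mathcal{L}_f\mathfrak{g}_{1,f}=-2\bigl(1+\log\tfrac{\sum_j\mathfrak{g}_{j,f}}{\mathfrak{g}_{1,f}}\bigr)\mathfrak{g}_{1,f}$, and near the midpoint $\sum_j\mathfrak{g}_{j,f}/\mathfrak{g}_{1,f}=O(1)$; equivalently, in the coefficient $r_{1,f}^2+1-d+2\log\sum_j\mathfrak{g}_{j,f}$ of \eqref{eq0157} the two $\tfrac{\eta^2}{4}$-sized contributions from $r_{1,f}^2$ and $-2\log\mathfrak{g}_{1,f}$ cancel, leaving $O(1)$. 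This is precisely the polynomial gain $\eta^{-2}$ that makes $|\langle\rho_f,\mathcal{L}_f\widehat{\psi}\rangle|\lesssim\eta^{-2}e^{-\frac14\eta^2}+e^{-\sigma\eta^2}\|f\|_{H^{-1}}$ strictly subleading to $\int E\widehat{\psi}\sim e^{-\frac14\eta^2}$, allowing the paper to test \eqref{eq0034} directly (using $N(\rho_f)\ge0$, $E\ge0$) without ever invoking the $\phi_f$-equation. Your $\Phi$ forfeits this gain, so the argument as written has a genuine gap; the blow-up alternative you mention is also not developed enough to be checked (no rescaling producing a nontrivial limit incompatible with \eqref{eq0044} is specified).
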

\begin{proof}
Without loss of generality, we may assume that $\eta=|y_{1,f}-y_{2,f}|$.  We define
\begin{eqnarray}\label{eq0089}
\widehat{\psi}=\mathfrak{g}_{1,f}\overline{\psi},
\end{eqnarray}
where $\overline{\psi}$ is a smooth cut-off function satisfying
\begin{eqnarray*}
\overline{\psi}=\left\{\aligned&1,\quad x\in B_1(\frac{1}{2}(y_{1,f}+y_{2,f})+\frac{2\log\eta}{\eta^2}(y_{2,f}-y_{1,f})+5),\\
&0,\quad B_{2}^c(\frac{1}{2}(y_{1,f}+y_{2,f})+\frac{2\log\eta}{\eta^2}(y_{2,f}-y_{1,f})+5).
\endaligned
\right.
\end{eqnarray*}
By similar estimates of \eqref{eq0036}, we know that $N(\rho_f)\geq0$.  Thus,
similar to \eqref{eq0051}, by multiplying \eqref{eq0034} with $-\widehat{\psi}$ and integrating by parts and \eqref{eq0036}, we have
\begin{eqnarray}
\|\widehat{\psi}\|_{H^1(\bbr^d)}\|f\|_{H^{-1}}&\gtrsim&\int_{\bbr^d}(r_{1,f}^2+1-d+2\log(\sum_{j=1}^\nu\mathfrak{g}_{j,f}))\overline{\psi}\mathfrak{g}_{1,f}\rho_fdx\notag\\
&&+\int_{\bbr^d}(E+N(\rho_f))\widehat{\psi}dx+\int_{\bbr^d}2\rho_f\nabla\overline{\psi}\nabla\mathfrak{g}_{1,f}
+\Delta\overline{\psi}\mathfrak{g}_{1,f}\rho_fdx\notag\\
&\gtrsim&\int_{\bbr^d}(r_{1,f}^2+1-d+2\log(\sum_{j=1}^\nu\mathfrak{g}_{j,f}))\overline{\psi}\mathfrak{g}_{1,f}\rho_fdx+\int_{\bbr^d}E\overline{\psi}dx\notag\\
&&\int_{\bbr^d}2\rho_f\nabla\overline{\psi}\nabla\mathfrak{g}_{1,f}
+\Delta\overline{\psi}\mathfrak{g}_{1,f}\rho_fdx.\label{eq0157}
\end{eqnarray}
By \eqref{eq0011} and \eqref{eqnnnn0001}, $\int_{\bbr^d}E\widehat{\psi}dx\sim e^{-\frac{1}{4}\eta^2}$.
By Lemmas~\ref{lem0005} and \ref{lem0006},
\begin{eqnarray*}
&&|\int_{\bbr^d}2\rho_f\nabla\overline{\psi}\nabla\mathfrak{g}_{1,f}
+\Delta\overline{\psi}\mathfrak{g}_{1,f}\rho_fdx|\\
&&+|\int_{\bbr^d}(r_{1,f}^2+1-d+2\log(\sum_{j=1}^\nu\mathfrak{g}_{j,f}))\overline{\psi}\mathfrak{g}_{1,f}\rho_fdx|\\
&\lesssim&\eta^{-2}e^{-\frac{1}{4}\eta^2}+e^{-\sigma\eta^2}\|f\|_{H^{-1}}.
\end{eqnarray*}
Note that $\|\widehat{\varphi}\mathfrak{g}_{1,f}\|_{H^1(\bbr^d)}\lesssim e^{-\frac{1}{8}\eta^2}$,
thus, we obtain $e^{-\frac{1}{8}\eta^2}\lesssim\|f\|_{H^{-1}}$.
\end{proof}

\vskip0.12in

We close this section by the following proposition.
\begin{proposition}\label{propnn0001}
As $\|f\|_{H^{-1}}\to0$, we have $\|\rho_f\|_{H^1(\bbr^d)}\lesssim\|f\|_{H^{-1}}$.
\end{proposition}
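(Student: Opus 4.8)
The plan is to simply assemble the splitting $\rho_f=\phi_f+\varphi_f$ together with the estimates already established, so that the proposition is essentially a corollary of Lemmas~\ref{lem0005}, \ref{lem0006} and \ref{lem0007}. First I would recall that, by the construction in this section, $\rho_f$ decomposes as $\rho_f=\phi_f+\varphi_f$, where $\phi_f$ solves \eqref{eqnn0034} and $\varphi_f=\rho_f-\phi_f$ solves \eqref{eqnnn0034}. By the triangle inequality it therefore suffices to bound $\|\phi_f\|_{H^1(\bbr^d)}$ and $\|\varphi_f\|_{H^1(\bbr^d)}$ separately.

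Next I would invoke Lemma~\ref{lem0005}, which yields $\|\phi_f\|_{H^1(\bbr^d)}\lesssim e^{-\frac{1}{8}\eta^2}$, and Lemma~\ref{lem0006}, which yields $\|\varphi_f\|_{H^1(\bbr^d)}\lesssim\|f\|_{H^{-1}}+e^{-(\frac{3}{8}-\sigma)\eta^2}$, as $\|f\|_{H^{-1}}\to0$. Adding these,
\begin{eqnarray*}
\|\rho_f\|_{H^1(\bbr^d)}\lesssim\|f\|_{H^{-1}}+e^{-\frac{1}{8}\eta^2}+e^{-(\frac{3}{8}-\sigma)\eta^2}.
\end{eqnarray*}
Since $\sigma>0$ may be fixed once and for all with $\sigma<\frac14$, we have $\frac{3}{8}-\sigma>\frac{1}{8}$, and since $\eta\to+\infty$ as $\|f\|_{H^{-1}}\to0$ by \eqref{eqnn0030}, the term $e^{-(\frac{3}{8}-\sigma)\eta^2}$ is dominated by $e^{-\frac{1}{8}\eta^2}$ for $\|f\|_{H^{-1}}$ small. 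Hence $\|\rho_f\|_{H^1(\bbr^d)}\lesssim\|f\|_{H^{-1}}+e^{-\frac{1}{8}\eta^2}$.

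Finally I would apply Lemma~\ref{lem0007}, which gives $e^{-\frac{1}{8}\eta^2}\lesssim\|f\|_{H^{-1}}$ as $\|f\|_{H^{-1}}\to0$, to absorb the remaining exponential term and conclude that $\|\rho_f\|_{H^1(\bbr^d)}\lesssim\|f\|_{H^{-1}}$. There is no genuine obstacle in this last step: all the analytic work has been discharged into the preceding lemmas — the good linear theory for $\mathcal{L}_f$ (Lemma~\ref{lem0004}), the contraction-mapping construction and decay of $\phi_f$ (Lemma~\ref{lem0005}), the energy estimate for the lower-order remainder $\varphi_f$ (Lemma~\ref{lem0006}), and the matching lower bound on the interaction scale (Lemma~\ref{lem0007}). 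The only point requiring a little care is the bookkeeping of exponents, namely that the auxiliary loss $\sigma$ in Lemma~\ref{lem0006} be taken strictly below $\frac14$; as $\sigma$ may be taken arbitrarily small, this is harmless.
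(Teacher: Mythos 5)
Your proposal is correct and follows exactly the same route as the paper, which simply asserts that the conclusion "follows immediately from Lemmas~\ref{lem0005}, \ref{lem0006} and \ref{lem0007}"; you have merely spelled out the triangle-inequality decomposition $\rho_f=\phi_f+\varphi_f$ and the absorption of the exponential terms via Lemma~\ref{lem0007}, which is the intended argument.
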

\begin{proof}
The conclusion follows immediately from Lemmas~\ref{lem0005}, \ref{lem0006} and \ref{lem0007}.
\end{proof}

\section{Optimality of (\ref{eqnn0002})}
In this section, we shall construct an example to show that (\ref{eqnn0002}) is optimal.
Let $\mathfrak{g}_{\pm L}=\mathfrak{g}(x\pm \frac{L}{2}e_1)$ with $e_1=(1,0,\cdots,0)$ and we consider the following equation:
\begin{eqnarray}\label{eq0070}
\left\{\aligned&\mathcal{L}_L(\rho_L)=E_L+N(\rho_{L})-\sum_{j=1}^d (a_{j,+,L}\partial_{x_j}\mathfrak{g}_{L}+a_{j,-,L}\partial_{x_j}\mathfrak{g}_{-L}),\quad\text{in }H^{-1},\\
&\langle \rho_L, \partial_{x_j}\mathfrak{g}_{\pm L}\rangle_{H^1(\bbr^d)}=0,\quad j=1,2,\cdots,d,
\endaligned\right.
\end{eqnarray}
where
\begin{eqnarray*}
\mathcal{L}_L=-\Delta-1-2\log(\mathfrak{g}_{L}+\mathfrak{g}_{-L})
\end{eqnarray*}
is the linear operator,
\begin{eqnarray*}
E_L=2(\mathfrak{g}_{L}+\mathfrak{g}_{-L})\log(\mathfrak{g}_{L}+\mathfrak{g}_{-L})-2(\mathfrak{g}_{L}\log\mathfrak{g}_{L}+\mathfrak{g}_{-L}\log\mathfrak{g}_{-L})
\end{eqnarray*}
is the error,
\begin{eqnarray*}
N(\rho_{L})&=&2(\mathfrak{g}_{L}+\mathfrak{g}_{-L}+\rho_{L})\log(\mathfrak{g}_{L}+\mathfrak{g}_{-L}+\rho_{L})
-2(\mathfrak{g}_{L}+\mathfrak{g}_{-L})\log(\mathfrak{g}_{L}+\mathfrak{g}_{-L})\notag\\
&&-2(1+\log(\mathfrak{g}_{L}+\mathfrak{g}_{-L}))\rho_{L}\label{eq0073}
\end{eqnarray*}
is the nonlinear part, and $a_{j,\pm,L}$ are Lagrange multipliers given by
\begin{eqnarray*}\label{eq0074}
a_{j,\pm,L}\sim\langle E_{L}+N(\rho_{L}), \partial_{x_j}\mathfrak{g}_{\pm L}\rangle_{L^2}.
\end{eqnarray*}
By Lemma~\ref{lem0005}, \eqref{eq0070} is unique solvable in $\mathbb{B}$, where $\mathbb{B}$ is given by \eqref{eqnn0100}.  Moreover, by Lemma~\ref{lem0001},
\begin{eqnarray}\label{eq0081}
\|\rho_{L}\|_{\sharp}+\sum_{j=1}^{d}\sum_{i=1}^{\nu}|a_{j,\pm,L}|\lesssim e^{-\frac{1}{8}L^2}.
\end{eqnarray}

\vskip0.12in

Let
\begin{eqnarray*}
u_L=\mathfrak{g}_{L}+\mathfrak{g}_{-L}+\rho_{L}.
\end{eqnarray*}
Then by similar estimates for \eqref{eqnnnn0080}, $u_L\sim\mathfrak{g}_{L}+\mathfrak{g}_{-L}>0$.  Moreover, by the classical regularity, $\rho_{L}\in C^{1,\alpha}(\bbr^d)$ for all $\alpha\in(0, 1)$.  It follows from \eqref{eq0070} that
\begin{eqnarray}\label{eq0083}
f_L&:=&-\Delta u_L+u_L-2u_L\log|u_L|\notag\\
&=&-\sum_{j=1}^d (a_{j,+,L}\partial_{x_j}\mathfrak{g}_{L}+a_{j,-,L}\partial_{x_j}\mathfrak{g}_{-L}).
\end{eqnarray}
\begin{proposition}\label{propnn0002}
As $L\to+\infty$, we have
\begin{eqnarray*}
\inf_{z_i\in\bbr^d}\|u_L-\sum_{i=1}^2\mathfrak{g}(\cdot-z_i)\|_{H^1(\bbr^d)}^2\sim\|f_L\|_{H^{-1}}^2.
\end{eqnarray*}
\end{proposition}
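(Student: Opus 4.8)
The plan is to establish the two-sided bound by pinching the quantity $\inf_{z_i}\|u_L-\sum_{i=1}^2\mathfrak{g}(\cdot-z_i)\|_{H^1}^2$ from above by $\|\rho_L\|_{H^1}^2$ (which is trivial, since $z_1=\frac{L}{2}e_1$, $z_2=-\frac{L}{2}e_1$ is an admissible choice) and from below by a matching multiple of $\|\rho_L\|_{H^1}^2$, and then to relate $\|\rho_L\|_{H^1}$ to $\|f_L\|_{H^{-1}}$ through the identity \eqref{eq0083}. The upper bound $\inf_{z_i}\|u_L-\sum_{i=1}^2\mathfrak{g}(\cdot-z_i)\|_{H^1}^2\le\|\rho_L\|_{H^1}^2\lesssim e^{-\frac14 L^2}$ is immediate from \eqref{eq0081} and the last line of Lemma~\ref{lem0005} (applied with $\nu=2$). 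For the lower bound I would argue that, since $u_L=\mathfrak{g}_L+\mathfrak{g}_{-L}+\rho_L$ with $\rho_L\perp\mathrm{span}\{\partial_{x_j}\mathfrak{g}_{\pm L}\}$ in $H^1$, the optimal translates $z_1,z_2$ realizing the infimum must be $O(e^{-\sigma L^2})$-close to $\pm\frac{L}{2}e_1$ (this is the standard nondegeneracy-of-the-projection argument, using \eqref{eq0044}–\eqref{eq0064} with $\nu=2$), hence $\inf_{z_i}\|u_L-\sum\mathfrak{g}(\cdot-z_i)\|_{H^1}^2\gtrsim\|\rho_L\|_{H^1}^2 - (\text{cross terms of order }e^{-(1-\sigma)L^2})$, and the cross terms are negligible compared to $\|\rho_L\|_{H^1}^2$ by Lemma~\ref{lem0007} (giving $e^{-\frac18\eta^2}\lesssim\|f_L\|_{H^{-1}}$) combined with the lower bound on $\rho_L$ established just below.

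The crux is therefore to show $\|\rho_L\|_{H^1}\sim\|f_L\|_{H^{-1}}\sim e^{-\frac18 L^2}$. For the lower bound $\|f_L\|_{H^{-1}}\gtrsim e^{-\frac18 L^2}$ I would reuse the argument of Lemma~\ref{lem0007}: test \eqref{eq0083} against the cutoff bump $\widehat{\psi}=\mathfrak{g}_{1,f}\overline{\psi}$ localized near the midpoint $\frac12(y_{1,f}+y_{2,f})+\frac{2\log\eta}{\eta^2}(y_{2,f}-y_{1,f})$; the main term $\int E_L\widehat{\psi}\,dx\sim e^{-\frac14 L^2}$ (by \eqref{eq0011} and \eqref{eqnnnn0001}), the error contributions from $\rho_L$ are of lower order $\eta^{-2}e^{-\frac14\eta^2}+e^{-\sigma\eta^2}\|f_L\|_{H^{-1}}$ by \eqref{eq0081}, and $\|\widehat{\psi}\|_{H^1}\lesssim e^{-\frac18 L^2}$, so $e^{-\frac18 L^2}\lesssim\|f_L\|_{H^{-1}}$. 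For the reverse, $\|f_L\|_{H^{-1}}\lesssim\|\rho_L\|_{H^1}+e^{-\frac18 L^2}$ follows by pairing \eqref{eq0083} with an arbitrary $H^1$ test function and estimating each term using $\|\rho_L\|_{\sharp}\lesssim e^{-\frac18 L^2}$ and $\sum|a_{j,\pm,L}|\lesssim e^{-\frac18 L^2}$; combined with the upper bound $\|\rho_L\|_{H^1}\lesssim e^{-\frac18 L^2}$ this pins $\|f_L\|_{H^{-1}}\sim e^{-\frac18 L^2}$. Finally, to get $\|\rho_L\|_{H^1}\gtrsim e^{-\frac18 L^2}$ — which we need so that the infimum is not dominated by cross terms — I would test \eqref{eq0070} against $\widehat{\psi}$ once more: the leading term is again $\int E_L\widehat{\psi}\,dx\sim e^{-\frac14 L^2}$, the nonlinear and multiplier terms are of lower order, and the left side is $\langle\mathcal{L}_L(\rho_L),\widehat{\psi}\rangle\lesssim\|\rho_L\|_{H^1}\|\widehat{\psi}\|_{H^1}\lesssim\|\rho_L\|_{H^1}e^{-\frac18 L^2}$, forcing $\|\rho_L\|_{H^1}\gtrsim e^{-\frac18 L^2}$.

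Assembling the pieces: $\inf_{z_i}\|u_L-\sum_{i=1}^2\mathfrak{g}(\cdot-z_i)\|_{H^1}^2\sim\|\rho_L\|_{H^1}^2\sim e^{-\frac14 L^2}\sim\|f_L\|_{H^{-1}}^2$, which is the claim. The main obstacle I anticipate is the lower bound on the infimum, i.e., controlling the cross terms $\langle\rho_L,\mathfrak{g}(\cdot-z_i)-\mathfrak{g}_{\pm L}\rangle_{H^1}$ that arise when one moves the centers away from $\pm\frac{L}{2}e_1$: one must show these are genuinely $o(\|\rho_L\|_{H^1}^2)$ and not merely $O(\|\rho_L\|_{H^1}^2)$, which requires exploiting both the orthogonality \eqref{eq0044} and the sharp exponential decay rates of the interaction integrals (as in \eqref{eq0064}), together with the now-established fact that $\|\rho_L\|_{H^1}\sim e^{-\frac18 L^2}$ dominates any $e^{-(1-\sigma)L^2}$ cross term. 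All other estimates are routine given Lemmas~\ref{lem0001}, \ref{lem0005}, \ref{lem0006} and \ref{lem0007}.
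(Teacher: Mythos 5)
Your proposal is correct and follows essentially the same route as the paper: the lower bound on the infimum and the lower bound on $\|f_L\|_{H^{-1}}$ both come from testing the equation against the localized test function $\widehat{\psi}=\mathfrak{g}_{1,L}\overline{\psi}$ of Lemma~\ref{lem0007}, while the matching upper bounds come from the $\|\cdot\|_{\sharp}$ estimate \eqref{eq0081}. The one bookkeeping difference is that you keep the constructed remainder $\rho_L$ throughout and argue that $\inf_{z_i}\|u_L-\sum_{i=1}^2\mathfrak{g}(\cdot-z_i)\|_{H^1}^2\sim\|\rho_L\|_{H^1}^2$ via orthogonality and cross-term estimates, whereas the paper passes to the remainder $\rho_L^*$ of the optimal decomposition \eqref{eq0086}, checks that it solves an equation of the same structure, derives $\|\rho_L^*\|_{\sharp}\lesssim e^{-\frac{1}{8}L^2}$ from Lemma~\ref{lem0004}, and then tests against $\widehat{\psi}$ directly, sidestepping the cross-term comparison. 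Two technical points you should tighten: the cross terms arising when one perturbs the centers away from $\pm\frac{L}{2}e_1$ are $O(\|\rho_L\|_{H^1}^3)$ because the first variation vanishes by the orthogonality conditions in \eqref{eq0070}, not $O(e^{-(1-\sigma)L^2})$ as you wrote, and it is this cubic order that makes them negligible against $\|\rho_L\|_{H^1}^2$; and the step $\langle\mathcal{L}_L(\rho_L),\widehat{\psi}\rangle\lesssim\|\rho_L\|_{H^1}\|\widehat{\psi}\|_{H^1}$ is not automatic since the potential in $\mathcal{L}_L$ grows like $|x|^2$ --- it must be obtained, as in \eqref{eq0051} and \eqref{eq0157}, by integrating by parts and using the equation satisfied by $\mathfrak{g}_{1,L}$, which cancels the quadratic growth of the potential against the Gaussian factor in $\widehat{\psi}=\mathfrak{g}_{1,L}\overline{\psi}$ and leaves only bounded coefficients plus $\nabla\overline{\psi}$ terms supported on an annulus, which a further integration by parts controls by $\|\rho_L\|_{H^1}e^{-\frac{1}{8}L^2}$.
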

\begin{proof}
Thanks to Lemma~\ref{lem0007} and \eqref{eq0081}, we have
\begin{eqnarray}\label{eq0085}
\|f_L\|_{H^{-1}}\sim e^{-\frac{1}{8}L^2}.
\end{eqnarray}
Now, we consider the minimizing problem
\begin{eqnarray}\label{eq0086}
\mathfrak{c}_L=\inf_{z_i\in\bbr^d}\|u_L-\sum_{i=1}^2\mathfrak{g}(\cdot-z_i)\|_{H^1(\bbr^d)}^2.
\end{eqnarray}
Clearly, by \eqref{eq0081}, $\mathfrak{c}_L\lesssim\|\rho_L\|_{H^1(\bbr^d)}^2\lesssim e^{-\frac{1}{4}L^2}$.
As before, we can also write
\begin{eqnarray}\label{eq0087}
u_L=\sum_{i=1}^2\mathfrak{g}(\cdot-y_{i,L})+\rho_{L}^*
\end{eqnarray}
where $\{y_{i,L}\}$ is the solution of \eqref{eq0086} and the remaining term $\rho_L^*$ satisfies
\begin{eqnarray}\label{eq0088}
\|\rho_L^*\|_{H^1(\bbr^d)}^2=\mathfrak{c}_L\lesssim e^{-\frac{1}{4}L^2}.
\end{eqnarray}
and the orthogonal conditions
\begin{eqnarray*}
\langle \rho_L^*, \partial_{x_l}\mathfrak{g}_{j,L}\rangle_{H^1(\bbr^d)}=0,\quad l=1,2,\cdots,d\text{ and }j=1,2,
\end{eqnarray*}
where $\mathfrak{g}_{j,L}=\mathfrak{g}(\cdot-y_{j,L})$.  By \eqref{eq0087} and \eqref{eq0088}, we may assume that $y_{1,L}=\frac{L}{2}e_1+o(1)$ and $y_{2,L}=-\frac{L}{2}e_1+o(1)$.  By \eqref{eq0083} and \eqref{eq0087}, $\rho_L^*$ satisfies the equation
\begin{eqnarray}\label{eq0091}
\left\{\aligned&-\Delta\rho_L^*+\rho_L^*=2u_L\log u_L-2(\sum_{j=1}^2\mathfrak{g}_{j,L}\log\mathfrak{g}_{j,L})+f_L,\quad\text{in }\bbr^d,\\
&\langle \rho_L^*, \partial_{x_j}\mathfrak{g}_{j,L}\rangle_{H^1(\bbr^d)}=0,\quad l=1,2,\cdots,d\text{ and }j=1,2.
\endaligned\right.
\end{eqnarray}
As before, we can write
\begin{eqnarray*}
2u_L\log u_L-2(\sum_{j=1}^2\mathfrak{g}_{j,L}\log\mathfrak{g}_{j,L})=E_L+N_L(\rho_L^*)+2(1+\log(\sum_{j=1}^2\mathfrak{g}_{j,L}))\rho_L^*,
\end{eqnarray*}
which, together with \eqref{eq0083}, implies that we can re-write \eqref{eq0091} as follows:
\begin{eqnarray*}\label{eqnn0091}
\left\{\aligned&-\mathcal{L}_L(\rho_L^*)=E_L+N_L(\rho_L^*)-\sum_{j=1}^d (a_{j,+,L}\partial_{x_j}\mathfrak{g}_{L}+a_{j,-,L}\partial_{x_j}\mathfrak{g}_{-L}),\quad\text{in }\bbr^d,\\
&\langle \rho_L^*, \partial_{x_j}\mathfrak{g}_{j,L}\rangle_{H^1(\bbr^d)}=0,\quad l=1,2,\cdots,d\text{ and }j=1,2.
\endaligned\right.
\end{eqnarray*}
By Lemma~\ref{lem0004} and similar estimates in the proof of Lemma~\ref{lem0005}, we have
\begin{eqnarray*}
\|\rho_L^*\|_{\sharp}\lesssim e^{-\frac{1}{8}L^2}.
\end{eqnarray*}
Now, using $\widehat{\psi}$, given by \eqref{eq0089}, as the test function of \eqref{eq0091} and estimating as that for \eqref{eq0157}, we have $\|\rho_L^*\|_{H^1(\bbr^d)}\gtrsim e^{-\frac{1}{8}L^2}$.
It follows from \eqref{eq0088} that $\|\rho_L^*\|_{H^1(\bbr^d)}\sim e^{-\frac{1}{8}L^2}$,
which, together with \eqref{eq0085} and \eqref{eq0088}, implies that $\|\rho_L^*\|_{H^1(\bbr^d)}\sim\|f_L\|_{H^{-1}}$.  It completes the proof.
\end{proof}

We close this section by the proof of Theorem~\ref{thmnn0001}.

\vskip0.12in

\noindent\textbf{Proof of Theorem~\ref{thmnn0001}:}\quad
The conclusion follows immediately from Propositions~\ref{propnn0001} and \ref{propnn0002}.
\hfill$\Box$

\end{document}